\newtheorem{cnj}{Conjecture}
\newtheorem{thm}{Theorem}
\newtheorem*{thm*}{Theorem}
\newtheorem{lem}[thm]{Lemma}
\newtheorem{prop}[thm]{Proposition}
\theoremstyle{definition}
\newtheorem{defn}[thm]{Definition}
\theoremstyle{remark}
\newtheorem{rem}[thm]{Remark}
\newcommand{\supp}{{\mathrm {supp\,}}}
\newcommand{\mres}{\mathbin{\vrule height 1.6ex depth 0pt width 0.13ex\vrule height 0.13ex depth 0pt width 1.3ex}}
\newcommand{\defeq}{\mathrel{\mathop:}=}
\newcommand{\RR}{\mathbb{R}}
\newcommand{\LL}{\mathcal{L}}
\newcommand{\EE}{\mathcal{E}}
\newcommand{\DD}{\mathcal{D}}
\newcommand{\HH}{\mathcal{H}}
\newcommand{\BB}{\mathcal{B}}
\newcommand{\MM}{\mathcal{M}}
\newcommand{\SSS}{\mathcal{S}}
\newcommand{\DIFF}{{\mathrm {D}}}
\newcommand{\BV}{{\mathrm {BV}}}
\newcommand{\rmc}{{\mathrm {c}}}
\newcommand{\rmloc}{{\mathrm {loc}}}
\def\Xint#1{\mathchoice
	{\XXint\displaystyle\textstyle{#1}}%
	{\XXint\textstyle\scriptstyle{#1}}%
	{\XXint\scriptstyle\scriptscriptstyle{#1}}%
	{\XXint\scriptscriptstyle\scriptscriptstyle{#1}}%
	\!\int}
\def\XXint#1#2#3{{\setbox0=\hbox{$#1{#2#3}{\int}$}
		\vcenter{\hbox{$#2#3$}}\kern-.5\wd0}}
\def\dashint{\Xint-}
\newcommand{\fr}{\penalty-20\null\hfill$\blacksquare$}     
\let\epsilon\varepsilon
\DeclareMathOperator*{\argmin}{arg\,min}
\let\epsilon\varepsilon
\title{Linear Inverse Problems with \\Hessian-Schatten Total Variation} 
\author{Luigi Ambrosio*}
\thanks{*Scuola Normale Superiore di Pisa,  Pisa,  Italy (luigi.ambrosio@sns.it,  camillo.brena@sns.it)}
\author{Shayan Aziznejad$^\dag$}
\thanks{$^\dag$Biomedical Imaging Group,  E\'cole Polytechnique Fe\'de\'rale de Lausanne,  Lausanne, Switzerland (sh.aziznejad@gmail.com,  michael.unser@epfl.ch)}
\author{Camillo Brena*}
\author{Michael Unser$^\dag$}
\begin{document}
\maketitle
\begin{abstract}
  In this paper, we characterize the class of extremal points of the unit ball of the Hessian-Schatten total variation (HTV) functional. The underlying motivation for our work stems from a general representer theorem  that characterizes the solution set of regularized linear inverse problems in terms of the  extremal points of the regularization ball. Our analysis is mainly based on studying the class of continuous and piecewise linear (CPWL) functions. In particular,  we show that in dimension $d=2$, CPWL functions are dense in the unit ball of the HTV functional. Moreover, we prove that a CPWL function is extremal if and only if its Hessian is minimally supported. For the converse, we prove that the density result (which we have only proven for dimension $d=2$) implies that the closure of the CPWL extreme points contains all extremal points. 
\end{abstract}
\tableofcontents
\section{Introduction}
Broadly speaking, the goal of  an inverse problem is to reconstruct an unknown signal of interest from a collection of (possibly noisy)   observations. Linear inverse problems, in particular,  are prevalent in various areas of signal processing, such as denoising, impainting, and image reconstruction. They are defined via the specification of  three principal components: (i) a hypothesis space $\mathcal{F}$ from which we aim to reconstruct the unknown signal $f^*\in\mathcal{F}$; (ii)  a linear forward operator $\boldsymbol{\nu}:\mathcal{F}\rightarrow \mathbb{R}^M$ that models the data acquisition process; and, (iii) the observed data that is stored in an array $\boldsymbol{y}\in\mathbb{R}^M$ with the implicit assumption that $\boldsymbol{y}\approx \boldsymbol{\nu}(f^*)$. The task is then to (approximately) reconstruct the unknown signal $f^*$ from the observed data $\boldsymbol{y}$.  From a variational perspective, the problem  can be formulated as a minimization of the form 
\begin{equation}\label{eq:inv_pb}
    f^* \in \argmin_{f\in\mathcal{F}} \left( E\left(\boldsymbol{\nu}(f),\boldsymbol{y}\right) + \lambda \mathcal{R}(f)\right),
\end{equation}
where $E:\mathbb{R}^M\times\mathbb{R}^M\rightarrow\mathbb{R}$ is a convex loss function that measures the data discrepancy, $\mathcal{R}:\mathcal{F}\rightarrow\mathbb{R}$ is the regularization functional that enforces prior knowledge on the reconstructed signal, and $\lambda>0$ is a tunable parameter that adjusts the two terms. 

The use of regularization for solving inverse problems dates back to the 1960s, when Tikhonov proposed a quadratic ($\ell_2$-type) functional for  solving finite-dimensional problems \cite{tikhonov1963solution}. More recently, Tikhonov regularization has been outperformed by $\ell_1$-type functionals in various settings \cite{tibshirani1996regression,donoho2003optimally}.  This is largely due to the sparsity-promoting effect of the latter, in the sense that the solution of an  $\ell_1$-regularized  inverse problem can be typically written as the linear combination of a few predefined elements, known as atoms  \cite{donoho2006most,bruckstein2009sparse}. Sparsity is a pivotal concept in modern signal processing and constitutes the core of many celebrated methods. The most notable example is the framework of compressed sensing \cite{candes2006robust,donoho2006compressed,eldar2012compressed}, which has brought lots of attention in the past decades. 

In general, regularization enhances the stability of the problem and alleviates its inherent ill-posedness, especially when the hypothesis space is much larger than  $M$. While this can happen in the discrete setting ({\it e.g.} when $\mathcal{F}=\mathbb{R}^d$ with $d\gg M$), it is inevitable in the continuum where $\mathcal{F}$ is an infinite-dimensional space of functions.  Since naturally occurring signals and images are usually indexed over the whole continuum, studying continuous-domain problems is, therefore, undeniably important. It thus comes with no surprise to see the rich literature on this class of optimization problems. Among the classical examples are the smoothing splines for interpolation \cite{schoenberg1988spline,reinsch1967smoothing} and the celebrated framework of learning over reproducing kernel Hilbert spaces \cite{wahba1990spline,scholkopf2001generalized}. Remarkably, the latter laid the foundation of numerous kernel-based machine learning schemes such as support-vector machines \cite{evgeniou2000regularization}. The key theoretical result of these frameworks is a ``representer theorem'' that provides a parametric form for their optimal solutions.   While these examples formulate optimization problems over Hilbert spaces, the representer theorem has been recently extended to cover generic convex optimization problems over Banach spaces \cite{Boyer2018,bredies2020sparsity,Unser2020b,unser2021convex}. In simple terms, these abstract results characterize the solution set of \eqref{eq:inv_pb} in terms of the extreme points of the unit ball of the regularization functional $B_{\mathcal{R}}=\{f\in\mathcal{F}: \mathcal{R}(f)\leq 1\}$. Hence, the original problem can be translated in finding the extreme points of the unit ball $B_{\mathcal{R}}$.

In parallel, Osher-Rudin-Fatemi's total-variation  has been systematically explored in the context of image restoration and denoising \cite{rudin1992nonlinear,chambolle2004algorithm,getreuer2012}. The total-variation of a differentiable function $f:\Omega \rightarrow \mathbb{R}$  can be computed as 
\begin{equation}\label{Eq:TV}
    {\rm TV}(f) = \int_{\Omega} \|\boldsymbol{\nabla} f (\boldsymbol{x})\|_{\ell_2} {\rm d}\boldsymbol{x}.
\end{equation}
The notion can be extended to cover non-differentiable functions using the   theory  of functions with bounded variation    \cite{AFP00,cohen2003harmonic}. In this case, the  representer theorem states that the solution can be written as the linear combination of some indicator functions \cite{bredies2020sparsity}. This adequately explains the so called ``stair-case effect'' of TV regularization. Subsequently, higher-order generalizations of TV regularization   have been proposed by Bredies {\it et al.}  \cite{bredies2010total,bredies2014regularization,bredies2020higher}. Particularly, the second-order TV has been used in various applications \cite{hinterberger2006variational,bergounioux2010second,knoll2011second}. By analogy with \eqref{Eq:TV}, the second-order TV is defined over the space of functions with bounded Hessian  \cite{Deme}. In particular, it can be computed for  twice-differentiable functions $f:\Omega \rightarrow \mathbb{R}$ as 
\begin{equation}\label{Eq:TV2}
    {\rm TV}^{(2)}(f) = \int_{\Omega} \|\nabla^2 f(\boldsymbol{x})\|_F {\rm d}\boldsymbol{x},
\end{equation}
where $\|{\bf \,\cdot\,}\|_F$ denotes the Frobenius norm of a matrix. Lefkimiatis {\it et al.} generalized the notion by replacing the Frobenius norm  with any Schatten-$p$ norm for $p\in [1,+\infty]$ \cite{lefki2013HS,lefki2013Poisson}. While this had been only defined for twice-differentiable functions, it has been recently extended to the  space of functions with bounded Hessian  \cite{Aziznejad2021HSTV}. The extended seminorm---the Hessian-Schatten total variation (HTV)---has also been used for learning continuous and piecewise linear (CPWL) mappings \cite{campos2021HTV,pourya2022delaunay}. The motivation and importance of the latter stems from the following observations:
\begin{enumerate}
    \item The CPWL family plays a significant role in deep learning. Indeed, it is known that the input-output mapping of any deep neural networks (DNN) with  rectified linear unit (ReLU) activation functions is a  CPWL function \cite{Montufar2014}. Conversely, any  CPWL mapping can be exactly represented by a  DNN with ReLU activation functions  \cite{arora2016understanding}.  These results provide a one-to-one correspondence between the CPWL family and the input-output mappings of commonly used DNNs. 
    \item For one-dimensional problems ({\it i.e.}, when $\Omega\subseteq\mathbb{R}$), the HTV seminorm coincides with the second-order TV. Remarkably, the representer theorem in this case states that the optimal solution can be achieved by a linear spline; that is,  a univariate CPWL function.  The latter suggests  the use of  ${\rm TV}^{(2)}$ regularization for learning univariate functions  \cite{savarese2019how,unser2019deepSpline,Aziznejad2020DeepLipschitz,Bohra020deepspline,debarre2021sparsest,Aziznejad2021RobustSparse}.
    \item It is known from the literature on low-rank matrix recovery that the Schatten-1 norm (also known as the nuclear norm) promotes low rank matrices \cite{davenport2016overview}. Hence, by using the ${\rm HTV}$ seminorm with $p=1$, one expects to obtain a mapping whose Hessian has low rank at most points, with the extreme case being the CPWL family whose Hessian is zero almost everywhere. 
\end{enumerate}
The aim of this paper is to identify the solution set of linear inverse problems with HTV regularization. Motivated by   recent general representer theorems (see, \cite{Boyer2018,unser2021convex}, we focus on the characterization of the extreme points of the unit ball of the HTV functional. After recalling some preliminary concepts (Section \ref{sec:prelim}), we study  the HTV seminorm and its associated native space from a mathematical perspective (Section \ref{sec:HTV}). Next, we prove our main theoretical result on density of CPWL functions in the unit ball of the HTV seminorm (Theorem \ref{mainthm}) in Section \ref{sec:CPWL}.  Finally, we invoke a   variant of the Krein-Milman theorem to characterize the extreme points of the unit ball of the HTV seminorm (Section \ref{sec:extreme}).
 
\section{Preliminaries}\label{sec:prelim}
Throughout the paper, we shall use   fairly standard notations  for various objects, such as function spaces and sets.  For example,  $\mathcal{L}^n$ and $\mathcal{H}^k$ denote the Lebesgue and $k$-dimensional Hausdorff  measures on $\mathbb{R}^n$,  respectively.   Below, we  recall some of the concepts that are foundational for this paper.
\subsection{Schatten norms}
\begin{defn}[Schatten norm]
	Let $p\in[1,+\infty]$. If $M\in\RR^{n\times n}$ and $s_1(M),\dots, s_n(M)\ge 0$ denote the singular values of $M$ (counted with their multiplicity), we define the Schatten $p$-norm of $M$  
	by $$|M|_p\defeq\Vert(s_1(M),\dots,s_n(M))\Vert_{\ell^p}.$$
\end{defn}
We recall that the scalar product between $M,N\in\RR^{n\times n}$ is defined by $$M\,\cdot\, N\defeq \tr(M^t N)=\sum_{i,j=1,\dots,n} M_{i,j}N_{i,j}$$ and induces the Hilbert-Schmidt norm.  Next,  we enumerate several properties of the Schatten norms that shall be used throughout the paper. We refer to standard books on matrix analysis (such as \cite{bhatia2013matrix}) for the proof of these results. 

\begin{prop}
	The family of Schatten norms satisfies the following properties.
	\begin{enumerate}
		\item If $M\in\RR^{n\times n}$ is symmetric, then its singular values $s_1(M),\dots,s_n(M)$ are equal to $|\lambda_1(M)|,\dots,|\lambda_n(M)|$, where $\lambda_1(M),\dots,\lambda_n(M)$ denote the eigenvalues of $M$ (counted with their multiplicity). Hence $|M|_p=\Vert(\lambda_1(M),\dots,\lambda_n(M))\Vert_{\ell^p}$.
		\item If $M\in\RR^{n\times n}$ and $N\in O(\RR^n)$, then $|M N|_p=|N M|_p=|M|_p$.
		\item If $M,N\in\RR^{n\times n}$, then $|M N|_p\le |M|_p |N|_p$.
		\item If $M\in\RR^{n\times n}$, then $|M|_p=\sup_N M\,\cdot\, N$, where the supremum is taken among all $N\in\RR^{n\times n}$ with $|N|_{p^*}\le 1$, for $p^*$ the conjugate exponent of $p$.
		\item If $M$ has rank $1$, then $|M|_p$ coincides with the Hilbert-Schmidt norm of $M$ for every $p\in [1,+\infty]$. 
		\item If $p\in(1,+\infty)$, then the Schatten $p$-norm is strictly convex \cite[Corollary 1]{aziznejad2021duality}.
		\item If $M\in\RR^{n\times n}$, then $|M|_p\le C|M|_q$, where  $C=C(n,p,q)$ depends only on $n$, $p$ and $q$. 
	\end{enumerate}
\end{prop}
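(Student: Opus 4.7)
The plan is to use the singular value decomposition (SVD) as the organizing tool for every item. Writing $M=U\Sigma V^t$ with $U,V\in O(\RR^n)$ and $\Sigma=\mathrm{diag}(s_1(M),\dots,s_n(M))$, the quantity $|M|_p$ is by definition the $\ell^p$ norm of the diagonal of $\Sigma$, so properties of singular values translate directly into properties of the Schatten norms.

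The easy items are (1), (2), (5) and (7). For (1), I would invoke the spectral theorem to write a symmetric $M$ as $QDQ^t$ with $D=\mathrm{diag}(\lambda_i)$, then absorb the signs of the $\lambda_i$ into one of the two orthogonal factors so as to exhibit an SVD with singular values $|\lambda_i|$. Item (2) follows because $MN=U\Sigma(V^tN)$ and $NM=(NU)\Sigma V^t$ are still SVDs when $N\in O(\RR^n)$, since products of orthogonal matrices are orthogonal. Item (5) is a direct computation: a rank-one matrix factors as $uv^t$, so $M^tM=\|u\|^2 vv^t$ has a single nonzero eigenvalue $\|u\|^2\|v\|^2$, hence the unique nonzero singular value equals $\|u\|\|v\|$ and $|M|_p$ is independent of $p$, coinciding with the Hilbert--Schmidt norm $\bigl(\sum_{ij} u_i^2 v_j^2\bigr)^{1/2}=\|u\|\|v\|$. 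Item (7) reduces to the elementary equivalence of $\ell^p$ and $\ell^q$ norms on $\RR^n$, applied to the vector of singular values.

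The substantive items are (3) and (4). For the duality (4), the natural route is Von Neumann's trace inequality $|\tr(M^tN)|\le\sum_i s_i(M)s_i(N)$ combined with H\"older's inequality in $\ell^p$; the supremum is attained by choosing $N$ with the same left and right singular vectors as $M$ and a singular value sequence saturating H\"older. For the submultiplicativity (3), I would rely on the mixed-norm inequality $|MN|_p\le |M|_p\,|N|_\infty$, a direct consequence of the Ky Fan--Horn log-majorization $\prod_{k=1}^j s_k(MN)\le\prod_{k=1}^j s_k(M)\,s_k(N)$, and then conclude via the trivial bound $|N|_\infty\le|N|_p$.

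The main obstacle, and the one item requiring the most care, is the strict convexity (6): the passage from the strictly convex $\ell^p$-norm on singular values to a strictly convex norm on matrices is not automatic, because distinct matrices can share the same singular values and the map $M\mapsto(s_i(M))$ is only $1$-Lipschitz and highly non-injective. Here I would simply appeal to the cited result \cite{aziznejad2021duality}; a self-contained argument would combine the uniform convexity of $\ell^p$ for $p\in(1,\infty)$ with Ky Fan-type inequalities controlling $s_i\bigl((M+N)/2\bigr)$ in terms of $s_i(M)$ and $s_i(N)$, and then analyze the equality cases to rule out coincidence up to left/right orthogonal factors.
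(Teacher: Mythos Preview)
Your proposal is correct, and in fact goes well beyond what the paper does: the paper gives no proof of this proposition at all, but simply refers to standard matrix-analysis texts (Bhatia) and, for item (6), to the cited reference \cite{aziznejad2021duality}, exactly as you do. Your SVD-based arguments for (1), (2), (5), (7) and the Von Neumann--H\"older route for (4) are the standard ones and are sound; for (3) you could shortcut the log-majorization step by the elementary bound $s_k(MN)\le s_1(N)\,s_k(M)=|N|_\infty\, s_k(M)$, but what you wrote is equally valid. In short, you have supplied self-contained arguments where the paper chose to omit them.
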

\begin{defn}[$L^r$-Schatten $p$-norm]
	Let $p,r\in[1,+\infty]$ and let $M\in (L^r(\RR^n))^{n\times n}$. We define the $L^r$-Schatten $p$-norm of $M$ as
	$$
	\Vert M\Vert_{p,r}\defeq \Vert |M|_p\Vert_{L^r(\RR^n)}.
	$$
	An analogous definition can be given when the reference measure for the $L^r$ space is not the Lebesgue measure.
\end{defn}

\subsection{Poincaré inequalities}
We recall that for a Borel set $A\subseteq \mathbb{R}^n$ with $\mathcal{L}^n(A)>0$  and $f\in L^1(A)$,  then 
$$\dashint_A f \dd\mathcal{L}^n \defeq \frac{1}{\mathcal{L}^n(A)} \int_A f \dd\mathcal{L}^n. $$
\begin{defn}\label{defnPoin}
    Let $A\subseteq\RR^n$ be an open domain. We say that $A$ supports Poincaré inequalities if for every $q\in[1,n)$ there exists a constant $C=C(A,q)$ depending on $A$ and $q$ such that
    $$
\bigg(\dashint_A \Big|f-\dashint_A f\Big|^{q^*}\dd\LL^n\bigg)^{1/q^*}\le C \bigg(\dashint_A|\nabla  f|^q\dd\LL^n\bigg)^{1/q}\qquad\text{for every }f\in W^{1,q}(A),
    $$
    where $1/{q^*}=1/q-1/n$.
\end{defn}
We recall that  any ball in $\mathbb{R}^n$ supports Poincaré inequalities  \cite[Theorem 4.9]{EG15}.

\begin{rem}\label{rempoinc}
Let $A$ be a bounded open domain supporting Poincaré inequalities.
We recall the following fact: if $f\in W^{1,1}_\rmloc(A)$ is such that $\int_A|\nabla  f|^q\dd\LL^n<+\infty$, then $f\in L^{q^*}(A)$, where $1/q^*=1/q-1/n$. To show this, apply a Poincaré inequality to $f_m\defeq (f\wedge m)\vee -m\in W^{1,q}(A)$, with $\int_A|\nabla  f_m|^q\dd\LL^n\le \int_A|\nabla  f|^q\dd\LL^n$, and deduce that, for a constant $c_m\defeq\dashint_A f_m\dd\LL^n$, it holds
$$
\bigg(\dashint_A |f_m-c_m|^{q^*}\dd\LL^n\bigg)^{1/q^*}\le C \bigg(\dashint_A|\nabla  f|^q\dd\LL^n\bigg)^{1/q}.
$$
Now, if $B\subseteq A$ is a ball with $\bar B\subseteq A$, we have that 
$\Vert f_m\Vert_{L^1(B)}\le \Vert f\Vert_{L^1(B)}<+\infty$ and $\Vert f_m-c_m\Vert_{L^1(B)}$ is bounded in $m$, so that $\sup_m|c_m|<\infty$.
We also have that $\Vert f_m-c_m\Vert_{L^{q^*}(A)}$ is uniformly bounded.  Thus,  we infer that $\Vert f_m\Vert_{L^{q^*}(A)}$ is bounded in $m$, whence $f\in {L^{q^*}(A)}$.\fr
\end{rem}

\subsection{Distributions}

	We denote, as usual, $\DD(\Omega)=C^\infty_\rmc(\Omega)$ the space of test functions and  $\DD'(\Omega)$ its dual, i.e.\ the space of distributions \cite{schwartz1957theorie}. If $T\in \DD'(\Omega)$, we denote with $\nabla^2 T$ the distributional Hessian of $T$, i.e.\ the matrix of distributions $\{\partial^2_{i,j} T\}_{i,j\in 1,\dots,n}$ where $(\partial^2_{i,j} T )(f)\defeq T(\partial_i\partial_j f)$ for every $f\in\DD(\Omega)$. In a natural way, if $F\in \DD(\Omega)^{n\times n}$, we denote $$\nabla^2 T(F)\defeq\sum_{i,j=1\dots,n} \partial^2_{i,j}T(F_{i,j}).$$
\begin{rem}\label{rempartial}
Let $T$ be a distribution on $\Omega$ such that for every $i=1,\dots,n$, $\partial_i T$ is a Radon measure. Then $T$ is induced by a $\BV_\rmloc(\Omega)$ function.

The   proof of this fact is   classical.  Here,  we sketch it for the reader's convenience. 

We let $\{\rho_k\}_k$ be a sequence of Friedrich mollifiers. Let $B\subseteq\Omega$ be a ball such that $\bar B\subseteq\Omega$, so that, if $k$ is big enough (that we will implicitly assume in what follows), we have a well defined distribution $\rho_k\ast T$ on $B$, which is induced by a $C^\infty(\bar B)$ function, say $t_k$. It is immediate to show that for every $i=1,\dots,n$, $\int_B|\partial_i t_k|\dd\LL^n$ are uniformly bounded in $k$, as $T$ has derivatives that are Radon measures.
Therefore, using a Poincaré inequality on $B$, we have that for some $q^*>1$,
$\Vert t_k-c_k\Vert_{L^{q^*}(B)}$ is uniformly bounded in $k$, where $c_k\defeq\dashint_B t_k\dd\LL^n$. Hence, up to non-relabelled subsequences, $t_k-c_k$ converges to an $L^{q^*}(B)$ function $f$ in the   weak topology of $L^{q^*}(B)$ and then in the weak topology of  $\DD'(B)$. Also, $t_k$ converges in the topology of $\DD'(B)$ to $T$.
Hence $c_k=t_k-(t_k-c_k)$ converges in the weak topology of $\DD'(B)$ to $T-f\in\DD'(B)$. This forces $\{c_k\}_k\subseteq\RR$ to be bounded, so that also $t_k$ was bounded in $L^{q^*}(B)$ and hence  $T$  is induced by an $L^{q^*}(B)$ function on $B$. A partition of unity argument shows that $T$ is induced by an $L^1_\rmloc(\Omega)$ function, whence the conclusion.\fr
\end{rem}

\section{Hessian--Schatten Total Variation}\label{sec:HTV}
In this section, we fix $\Omega\subseteq\RR^n$ to be an open set and $p\in[1,+\infty]$. We let $p^*$ denote the conjugate exponent of $p$.
First,   we recall the definition of the HTV seminorm, presented in \cite{Aziznejad2021HSTV}, in the  spirit of the classical theory of functions of bounded variation. Next, we review some known results for the space of functions with bounded Hessian (see, \cite{Deme}), proposing at the same time a few refinements and/or extensions. 
\subsection{Definitions and Basic Properties}
\begin{defn}[Hessian--Schatten total variation]\label{defHSTV}
	Let $f\in L^1_\rmloc(\Omega)$. For every $A\subseteq\Omega$ open we define the Hessian-Schatten total variation of $f$ as 
	\begin{equation}\label{defd2}
		|\DIFF^2_p f|(A)\defeq \sup_F \int_A \sum_{i,j=1,\dots,n} f \partial_i\partial_j F_{i,j}\dd\LL^n,
	\end{equation}
	where the supremum runs among all $F\in C_\rmc^\infty(A)^{n\times n}$ with $\Vert F\Vert_{p^*,\infty}\le 1$.
	We say that $f$ has bounded $p$-Hessian--Schatten variation in $\Omega$ if $|\DIFF^2_p f|(\Omega)<\infty$. 
\end{defn}
\begin{rem}
	If $f$ has bounded $p$-Hessian--Schatten variation in  $\Omega$, then the set function defined in \eqref{defd2} is the restriction to open sets of a finite Borel measure, that we still call $	|\DIFF^2_p f|$. This can be proved with a classical argument, building upon \cite{DGL} (see also \cite[Theorem 1.53]{AFP00}).
	
	By its very definition, the $p$-Hessian--Schatten variation is lower semicontinuous with respect to $L^1_\rmloc$ convergence.\fr
\end{rem}

For any couple $p,q\in [1,+\infty]$, $f$ has bounded $p$-Hessian--Schatten variation if and only if $f$ has bounded $q$-Hessian--Schatten variation and moreover $$C^{-1}|\DIFF^2_p f|\le |\DIFF^2_q f|\le C|\DIFF^2_p f| $$
for some constant $C=C(p,q)$ depending only on $p$ and $q$.  
Hence, the induced topology is independent of the choice of $p$. 
For this reason, in what follows, we will often implicitly take $p=1$ (omitting thus to write $p$), and we will stress $p$ when this choice plays a role.

\bigskip

We prove now that having bounded Hessian--Schatten variation measure is equivalent to membership in $W^{1,1}_\rmloc$ with gradient with bounded total variation. Also, we compare the Hessian--Schatten variation measure to the total variation measure of the gradient. This will be a key observation, as it will allow us to use the classical theory of functions of bounded variation, see e.g.\ \cite{AFP00}.

\begin{prop}\label{hessiananddiffgrad}
Let $f\in L^1_{\rmloc}(\Omega)$. Then the following are equivalent:
\begin{enumerate}
    \item $f$ has bounded Hessian--Schatten variation in $\Omega$,
    \item $f\in W^{1,1}_\rmloc(\Omega)$ and $\nabla f\in \BV_\rmloc(\Omega)$ with $|\DIFF\nabla f|(\Omega)<\infty$.
\end{enumerate}  If this is the case, then, as measures,
\begin{equation}\label{comptv}
    |\DIFF^2_p f|=\bigg|\dv{\DIFF\nabla f}{|\DIFF\nabla f|}\bigg|_p |\DIFF\nabla f|.
\end{equation}
In particular, there exists a constant $C=C(n,p)$ depending only on $n$ and $p$ such that 
$$
C^{-1}|\DIFF \nabla f|\le |\DIFF^2_p f|\le C|\DIFF \nabla f|
$$
as measures.
\end{prop}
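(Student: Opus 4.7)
The plan is to prove the two implications separately: the forward direction (2) $\Rightarrow$ (1) together with the upper bound in \eqref{comptv} is essentially integration by parts combined with Schatten duality, whereas (1) $\Rightarrow$ (2) is obtained by testing \eqref{defd2} against a well-chosen matrix-valued field. Throughout I denote by $\DIFF\nabla f$ the matrix-valued Radon measure $\{\partial_i\partial_j f\}_{i,j}$ whenever it exists, by $|\DIFF\nabla f|$ its total variation, and by $M\defeq\dv{\DIFF\nabla f}{|\DIFF\nabla f|}$ its polar.

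For (2) $\Rightarrow$ (1), fix an open set $A\subseteq\Omega$ and any $F\in C^\infty_\rmc(A)^{n\times n}$ with $\Vert F\Vert_{p^*,\infty}\le 1$. Two successive integrations by parts, the first using $f\in W^{1,1}_\rmloc$ and the second using $\partial_j f\in\BV_\rmloc$, give
\[
\int_A \sum_{i,j} f\,\partial_i\partial_j F_{i,j}\,\dd\LL^n = \int_A F\,\cdot\,\dd(\DIFF\nabla f) = \int_A F\,\cdot\, M\,\dd|\DIFF\nabla f|.
\]
By property (4) applied pointwise, $F\,\cdot\, M\le |F|_{p^*}\,|M|_p\le |M|_p$ on $A$, so the right-hand side is bounded by $\int_A |M|_p\,\dd|\DIFF\nabla f|$. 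Supping over $F$ yields both $|\DIFF^2_p f|(A)<\infty$ and the $\le$ inequality in \eqref{comptv}.

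For (1) $\Rightarrow$ (2), fix indices $i,j\in\{1,\dots,n\}$, $g\in C^\infty_\rmc(\Omega)$, and test \eqref{defd2} against $F=\pm g E_{i,j}$, where $E_{i,j}$ is the canonical elementary matrix. Since $E_{i,j}$ has rank one, property (5) gives $|E_{i,j}|_{p^*}=1$, so $\Vert F\Vert_{p^*,\infty}=\Vert g\Vert_\infty$, and therefore
\[
\bigg|\int_\Omega f\,\partial_i\partial_j g\,\dd\LL^n\bigg|\le \Vert g\Vert_\infty\,|\DIFF^2_p f|(\Omega).
\]
By Riesz representation, each second distributional partial $\partial_i\partial_j f$ then extends to a finite Radon measure on $\Omega$. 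In particular, for each $j$ the distribution $\partial_j f$ has all first partials equal to Radon measures, so Remark \ref{rempartial} identifies $\partial_j f$ with a $\BV_\rmloc$ function. It follows that $\nabla f\in L^1_\rmloc(\Omega)$, hence $f\in W^{1,1}_\rmloc(\Omega)$, and $\nabla f\in\BV_\rmloc(\Omega)$ with $|\DIFF\nabla f|(\Omega)<\infty$.

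The remaining step, the $\ge$ inequality in \eqref{comptv}, is where I expect the main technical work. Property (4) together with a measurable selection argument provides a Borel matrix field $N:A\to\RR^{n\times n}$ with $|N|_{p^*}\le 1$ and $N\,\cdot\, M=|M|_p$ $|\DIFF\nabla f|$-almost everywhere. The goal is to produce smooth compactly supported approximants $F_k\in C^\infty_\rmc(A)^{n\times n}$ with $\Vert F_k\Vert_{p^*,\infty}\le 1$ satisfying
\[
\int_A F_k\,\cdot\,\dd(\DIFF\nabla f)\ \longrightarrow\ \int_A |M|_p\,\dd|\DIFF\nabla f|,
\]
achieved via Lusin's theorem applied to $N$ against the Radon measure $|\DIFF\nabla f|$, followed by mollification and a cutoff. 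The delicate point, which I expect to be the main obstacle, is to preserve the pointwise Schatten-$p^*$ bound after smoothing; a harmless rescaling $F_k/(1+\varepsilon_k)$ with $\varepsilon_k\to 0$ handles this. Finally, the two-sided comparison $C^{-1}|\DIFF\nabla f|\le |\DIFF^2_p f|\le C|\DIFF\nabla f|$ drops out of \eqref{comptv} and the equivalence of Schatten norms recorded in property (7).
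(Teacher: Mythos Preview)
Your proposal is correct and follows essentially the same route as the paper: integration by parts with Schatten duality for the upper bound, Riesz representation together with Remark~\ref{rempartial} for $(1)\Rightarrow(2)$, and a Lusin/mollification argument for the lower bound in \eqref{comptv}. One simplification worth noting: the rescaling $F_k/(1+\varepsilon_k)$ you flag as the ``delicate point'' is unnecessary, since convexity of the Schatten $p^*$-norm (Jensen's inequality) gives $|\rho_k\ast N(x)|_{p^*}\le 1$ pointwise whenever $|N|_{p^*}\le 1$ everywhere---this is exactly how the paper dispatches the mollification step.
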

\begin{proof}
We divide the proof in two steps.\\
\medskip\\\textbf{Step 1}. We prove $1\Rightarrow 2$. Let $T\in\DD'(\Omega)$ denote the distribution induced by $f\in L^1_\rmloc(\Omega)$. For $i=1,\dots,n$, define $S_i\defeq \partial_i T\in\DD'(\Omega)$. By the fact that $f$ has bounded Hessian--Schatten variation in $\Omega$, we can apply Riesz Theorem and deduce that for every $j=1,\dots,n$, $\partial_j S_i$ is induced by a finite measure on $\Omega$. Indeed, if $\varphi\in C_c^\infty(\Omega)$, it holds
$$
\partial_j S_i(\varphi)=\int_\Omega f \partial_j\partial_i\varphi\le C\Vert\varphi\Vert_{\infty},
$$
where $C$ is independent of $\varphi$.
Then, by Remark \ref{rempartial}, $S_i$ is induced by an $L^1_\rmloc(\Omega)$ function,  which proves the claim.
\medskip\\\textbf{Step 2}. We prove $2\Rightarrow 1$ and \eqref{comptv}. 
First, we can write $\DIFF\nabla f=M \mu$, where $|M(x)|_p=1$ for $\mu$-a.e.\ $x\in\Omega$. Namely,  $$M=\dv{\DIFF\nabla f}{|\DIFF\nabla f|} \bigg|\dv{\DIFF\nabla f}{|\DIFF\nabla f|}\bigg|_p^{-1}\qquad\text{and} \qquad\mu= \bigg|\dv{\DIFF\nabla f}{|\DIFF\nabla f|}\bigg|_p|\DIFF\nabla f|.$$ This decomposition depends on $p$, but we will not make this dependence explicit.

Let $A\subseteq\Omega$ be open and let $F\in C_\rmc^\infty(A)^{n\times n}$ with $\Vert F\Vert_{p^*,\infty}\le 1$. Then
\begin{equation}\notag
 \int_A \sum_{i,j} f\partial_i\partial_j F_{i,j}=\int_A \sum_{i,j} M_{i,j} F_{i,j}\dd\mu\le\mu(A),
\end{equation}
so that $f$ has bounded $p$-Hessian-Schatten variation and $|\DIFF_p^2 f|\le \mu$ as measures on $\Omega$. 

We show now that $\mu(\Omega)\le |\DIFF_p^2 f|(\Omega)$. Fix now $\epsilon>0$. 
By Lusin's Theorem, we can find a compact set $K\subseteq\Omega$ such that $\mu(\Omega\setminus K)<\epsilon$ and
the restriction of $M$ to $K$ is continuous. Since 
$$
\sup_{|N|_{p^*}\leq 1}M(x)\,\cdot\,N=1\qquad \text{for every } x\in K,
$$
by the continuity of $M$ we can find a Borel function $N$ with finitely many values such that $|N(x)|_{p^*}\le 1$ for every $x\in\Omega$ 
and $M\,\cdot\, N\ge 1-\epsilon$ on $K$. Now we take $\psi\in C^\infty_\rmc(\Omega)$  with $\Vert\psi\Vert_{\infty}\le 1$ and we let $\{\rho_k\}_k$ be a sequence of Friedrich mollifiers. We consider (if $k$ is big enough) $\psi(\rho_k\ast N)\in C_\rmc^\infty(\Omega)$, which satisfies $\Vert \psi(\rho_k\ast N)\Vert_{p^*,\infty}\le 1$
on $\Omega$ (by convexity of the Schatten $p^*$-norm). Therefore,
$$
|\DIFF^2_p f|(\Omega)\ge \int_\Omega \sum_{i,j} M_{i,j} \psi(\rho_k\ast N_{i,j})\dd\mu\geq
\int_K \sum_{i,j} M_{i,j} \psi(\rho_k\ast N_{i,j})\dd\mu-\epsilon.
$$
We let $k\rightarrow \infty$, taking into account that $x\mapsto N(x)$ is continuous on $K$ and we recall that $\psi$ was arbitrary to infer that 
$$
|\DIFF^2_p f|(\Omega)\ge  \int_K \sum_{i,j} M_{i,j} N_{i,j}\dd\mu
-\epsilon\geq(1-\epsilon)\mu(K)-\epsilon \ge (1-\epsilon)\mu(\Omega)-2\epsilon.
$$
As $\epsilon>0$ was arbitrary, the proof is concluded as we have shown that $|\DIFF^2_p f|=\mu$.
\end{proof}
\begin{rem}

	One may wonder what happens if, instead of defining the Hessian--Schatten total variation only on $L^1_\rmloc$ functions, we define it on the bigger space of distributions, extending, in a natural way, \eqref{defd2} to distributions, i.e.\ interpreting the right hand side as $\sup_F \sum_{i,j=1}^n\partial_i\partial_j T(F_{i,j})=\sup_F \sum_{i,j=1}^n T(\partial_i\partial_jF_{i,j})$.
	
	It turns out that the difference is immaterial:  distributions with bounded Hessian--Schatten total variation are induced by $L^1_\rmloc$ functions, and, of course, the two definitions of $p$-Hessian--Schatten total variation coincide. This is proved exactly as in \textbf{Step 1} of the proof of Proposition \ref{hessiananddiffgrad}, using Remark \ref{rempartial} once more.\fr
\end{rem}

The following proposition is basically taken from \cite{Deme} and is a density (in energy) result akin to Meyers--Serrin Theorem.
\begin{prop}\label{myersserrin}
	Let $f\in L^1_\rmloc(\Omega)$. Then, for every $A\subseteq\Omega$ open, it holds
	$$
	|\DIFF^2_p f|(A)=\inf\left\{\liminf_k \int_A |\nabla^2 f_k|_p\dd\LL^n\right\}
	$$
	where the infimum is taken among all sequences $\{f_k\}_k\subseteq C^\infty(A)$ such that $f_k\rightarrow f$ in $L^1_\rmloc(A)$.
	If moreover $f\in L^1(A)$, the convergence in $L^1_\rmloc(A)$ above can be replaced by convergence in $L^1(A)$.
\end{prop}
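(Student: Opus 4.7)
The proof proposal splits into two inequalities: the easy lower semicontinuity bound, and the harder ``density in energy'' construction.

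For the inequality $|\DIFF^2_p f|(A) \le \inf \liminf_k \int_A |\nabla^2 f_k|_p \dd\LL^n$, I would first observe that for any $f_k \in C^\infty(A)$, two integrations by parts applied to the definition \eqref{defd2} together with the duality characterization of the Schatten norm (property (4) in the proposition on Schatten norms) give $|\DIFF^2_p f_k|(A) = \int_A |\nabla^2 f_k|_p \dd\LL^n$. The inequality then follows at once from the $L^1_\rmloc$-lower semicontinuity of the functional $g \mapsto |\DIFF^2_p g|(A)$, which was noted immediately after Definition \ref{defHSTV}.

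For the reverse inequality, I would use a Meyers--Serrin type construction. We may assume $|\DIFF^2_p f|(A) < \infty$, otherwise there is nothing to prove; then Proposition \ref{hessiananddiffgrad} gives $f \in W^{1,1}_\rmloc(A)$ with $\DIFF\nabla f = M\mu$, $|M|_p = 1$ $\mu$-a.e., $\mu = |\DIFF^2_p f|$. Fix $\epsilon > 0$. Let $A_m \defeq \{x \in A : \dist(x, A^c) > 1/m\} \cap B(0,m)$ with $A_{-1} = A_0 = \emptyset$, and pick a smooth partition of unity $\{\varphi_m\}_{m \ge 1}$ subordinate to the locally finite cover $\{A_{m+1} \setminus \bar A_{m-1}\}_{m\ge 1}$. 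For each $m$, choose a mollification parameter $\delta_m > 0$ small enough that $\rho_{\delta_m} \ast (\varphi_m f)$ is supported in $A_{m+2} \setminus \bar A_{m-2}$ and
\[
\Vert \rho_{\delta_m}\ast(\varphi_m f) - \varphi_m f\Vert_{L^1(A)} + \Vert \rho_{\delta_m}\ast(f\nabla\varphi_m) - f\nabla\varphi_m\Vert_{L^1(A)} < \epsilon 2^{-m}
\]
(plus analogous conditions on mollifications of $\varphi_m \nabla f$). Define $f_\epsilon \defeq \sum_m \rho_{\delta_m} \ast (\varphi_m f)$, which is smooth because the sum is locally finite. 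Choosing $\delta_m$ small also yields $\Vert f_\epsilon - f\Vert_{L^1(A)} \le \epsilon$, so $f_\epsilon \to f$ in $L^1(A)$ (and in $L^1_\rmloc(A)$ in general).

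The core estimate bounds $\int_A |\nabla^2 f_\epsilon|_p \dd\LL^n$. Using $\sum_m \varphi_m \equiv 1$ (so $\sum_m \nabla\varphi_m = 0$ and $\sum_m \nabla^2\varphi_m = 0$), I would write
\[
\nabla^2 f_\epsilon = \sum_m \rho_{\delta_m} \ast \bigl(\varphi_m \, \DIFF\nabla f\bigr) + R_\epsilon,
\]
where the remainder $R_\epsilon$ collects the commutator-type terms involving $\nabla\varphi_m$ and $\nabla^2\varphi_m$; by the partition-of-unity cancellation and the $L^1$ control above, $R_\epsilon \to 0$ in $L^1$. For the main term, the convexity of the Schatten $p$-norm gives the pointwise bound
\[
\Bigl|\rho_{\delta_m} \ast (\varphi_m M\mu)\Bigr|_p(x) \le \int \rho_{\delta_m}(x-y) \varphi_m(y) |M(y)|_p \dd\mu(y) = \rho_{\delta_m}\ast(\varphi_m \mu)(x),
\]
and summing in $m$ together with Fubini yields $\int_A |\nabla^2 f_\epsilon|_p \dd\LL^n \le \mu(A) + o(1) = |\DIFF^2_p f|(A) + o(1)$ as $\epsilon \to 0$. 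Letting $\epsilon \to 0$ closes the proof, and the statement about $L^1(A)$ convergence is built into the construction. The main delicate point is the control of $R_\epsilon$: one must ensure that the finite-overlap property of the cover keeps the Schatten norm of the commutator terms integrable, and that taking $\delta_m$ sufficiently small with respect to $\Vert \nabla\varphi_m\Vert_\infty$ and $\Vert \nabla^2 \varphi_m\Vert_\infty$ forces its $L^1$-norm to be $O(\epsilon)$.
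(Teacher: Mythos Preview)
Your proof is correct and follows exactly the Meyers--Serrin strategy that the paper invokes: lower semicontinuity for one inequality, and a partition-of-unity/mollification construction for the other, using convexity of $|\,\cdot\,|_p$ to control the main term. The paper's own proof is much terser---it simply cites \cite[Proposition 1.4]{Deme} for the construction and remarks that one can either adapt it directly to Schatten norms (as you do) or alternatively appeal to the Reshetnyak continuity theorem together with \eqref{comptv}.
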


\begin{proof}
	The $(\le)$ inequality is trivial by lower semicontinuity. The proof of the opposite inequality is due to a Meyers--Serrin argument, and can be obtained adapting \cite[Proposition 1.4]{Deme} (we know that $f\in W^{1,1}_\rmloc(\Omega)$ thanks to Proposition \ref{hessiananddiffgrad}). Notice that in the proof of \cite{Deme} Hilbert-Schmidt norms instead of Schatten norms are used. The proof can be adapted with no effort to any norm. Alternatively, one may notice that the result with Hilbert-Schmidt norms implies the result for any other matrix norm, thanks to the Reshetnyak continuity Theorem (see e.g.\ \cite[Theorem 2.39]{AFP00}), taking into account that $\DIFF\nabla f_k\rightarrow\DIFF \nabla f$ in the weak* topology and \eqref{comptv}.
\end{proof}

\bigskip

Now we show that Hessian--Schatten total variations decrease under the effect of convolutions, that is a a well-known property
in the $\BV$ context. 

\begin{lem}\label{mollif}
Let $f\in L^1_\rmloc(\Omega)$ with bounded Hessian--Schatten variation in $\Omega$. Let also $A\subseteq\RR^n$ open and $\epsilon>0$ with $B_\epsilon(A)\subseteq\Omega$. Then, if $\rho\in C_\rmc(\RR^n)$ is a convolution kernel with $\supp\rho\subseteq B_\epsilon(0)$, it holds
$$
|\DIFF_p^2 (\rho\ast f)|(A)\le|\DIFF_p^2 f|(B_\epsilon(A)).  
$$ 
\end{lem}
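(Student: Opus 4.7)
The plan is to unfold the supremum in the definition of $|\DIFF^2_p(\rho\ast f)|(A)$ and move the convolution from $f$ onto the test matrix field by Fubini. Concretely, fix $F\in C_\rmc^\infty(A)^{n\times n}$ with $\Vert F\Vert_{p^*,\infty}\le 1$, and set $\tilde\rho(x)\defeq \rho(-x)$. Since $f\in L^1_\rmloc(\Omega)$ and $\rho$ has compact support, Fubini yields
$$
\int_A(\rho\ast f)\,\partial_i\partial_j F_{i,j}\,\dd\LL^n
=\int_{\RR^n} f\cdot\partial_i\partial_j(\tilde\rho\ast F_{i,j})\,\dd\LL^n,
$$
and the right-hand integrand is supported in $\supp F+\supp\tilde\rho\subseteq B_\epsilon(A)\subseteq\Omega$. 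Set $G\defeq\tilde\rho\ast F$; then $G\in C_\rmc^\infty(B_\epsilon(A))^{n\times n}$.

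The central point is to verify that $G$ is admissible in the definition of $|\DIFF^2_p f|(B_\epsilon(A))$, i.e.\ that $\Vert G\Vert_{p^*,\infty}\le 1$. I would deduce this from the convexity of the Schatten $p^*$-norm via a Jensen-type inequality: since $\rho\ge 0$ with $\int\rho=1$,
$$
|G(x)|_{p^*}=\bigg|\int\rho(y-x)F(y)\,\dd y\bigg|_{p^*}
\le \int\rho(y-x)|F(y)|_{p^*}\,\dd y
\le \Vert F\Vert_{p^*,\infty}=1.
$$

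Combining the two steps and summing in $i,j$ gives
$$
\int_A\sum_{i,j}(\rho\ast f)\,\partial_i\partial_j F_{i,j}\,\dd\LL^n
=\int_{B_\epsilon(A)}\sum_{i,j} f\,\partial_i\partial_j G_{i,j}\,\dd\LL^n
\le |\DIFF^2_p f|(B_\epsilon(A)),
$$
and taking the supremum over admissible $F$ yields the desired estimate. I do not foresee any serious obstacle: the only nontrivial ingredient is the convexity argument for the Schatten norm under averaging, which is standard and mirrors the argument already invoked in the proof of Proposition \ref{hessiananddiffgrad}. A minor bookkeeping point is to keep supports inside $\Omega$, which is guaranteed by the hypothesis $B_\epsilon(A)\subseteq\Omega$.
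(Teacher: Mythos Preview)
Your proof is correct and follows essentially the same route as the paper: transfer the mollifier from $f$ to the test field $F$ via Fubini, check that $\tilde\rho\ast F$ is supported in $B_\epsilon(A)$, and verify $\Vert\tilde\rho\ast F\Vert_{p^*,\infty}\le 1$ by convexity of the Schatten norm. The only cosmetic difference is that the paper phrases the norm bound through the duality $|M|_{p^*}=\sup_{|N|_p\le 1}M\cdot N$ rather than a direct Jensen inequality, but both arguments rest on the same convexity and the nonnegativity of the kernel.
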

\begin{proof}
Let $F\in C_\rmc^\infty(A)^{n\times n}$ with $\Vert F\Vert_{p^*,\infty}\le 1$. We compute 
\begin{equation}\label{vsnojd}
	\int_A \sum_{i,j} (\rho\ast f)\partial_i\partial_j F_{i,j}=\int_A \sum_{i,j} f\partial_i\partial_j (\check{\rho}\ast F_{i,j}),
\end{equation}
where $\check{\rho}(x)\defeq\rho(-x)$. Notice that, defining the action of the mollification component-wise, $\check{\rho}\ast F\in C_\rmc^\infty(\Omega)$ (by the assumption on the support of $\rho$) with (by duality)
$$
|(\check{\rho}\ast F)(x)|_p=\sup_M M\,\cdot\,(\check{\rho}\ast F)(x)=\sup_M  (\check{\rho}\ast (M\,\cdot\, F))(x)\le (\check{\rho}\ast 1) (x)\le 1,
$$
where the supremum is taken among all $M\in\RR^{n\times n}$ with $|M|_{p^*}\le 1$. Here we used that $|F|_{p^*}(x)\le 1$ for every $x\in\Omega$.
Hence $\Vert (\check{\rho}\ast F)\Vert_{p^*,\infty}\le 1$. Also, $\check{\rho}\ast F$ is supported in $B_\epsilon(A)$, so that the right hand side of \eqref{vsnojd} is bounded by $|\DIFF^2_p f|(B_\epsilon(A))$ and the proof is concluded as $F$ was arbitrary.
\end{proof}

\bigskip

In the following proposition we obtain an analogue of the classical Sobolev embedding Theorems tailored for our situation.  Recall Definition \ref{defnPoin}.
\begin{prop}[Sobolev embedding]\label{sobo}
Let $f\in L^1_\rmloc(\Omega)$ with bounded Hessian--Schatten variation in $\Omega$. 
Then
\begin{alignat*}{3}
	&f\in L^{n/(n-2)}_{\rmloc}(\Omega)\cap W_\rmloc^{1,n/(n-1)}(\Omega)\qquad&&\text{if }n\ge 3,\\
	&f\in L^\infty_{\rmloc}(\Omega)\cap W^{1,2}_\rmloc(\Omega)\qquad&&\text{if }n= 2,\\
	&f\in L^\infty_{\rmloc}(\Omega)\cap W^{1,\infty}_\rmloc(\Omega)\qquad&&\text{if }n= 1\end{alignat*}
and, if $n=2$, $f$ has a continuous representative.

More explicitly, for every bounded domain $A\subseteq\Omega$ that supports Poincaré inequalities and $r\in[1,+\infty)$, there an affine map $g=g(A,f)$ such that, setting $\tilde f\defeq f-g$, it holds that
\begin{alignat}{3}
	\label{eq1}&\Vert \tilde f\Vert_{L^{n/(n-2)}(A)}+\Vert \nabla\tilde f\Vert_{L^{n/(n-1)}(A)}\le C(A)|\DIFF^2 f|(A)\qquad&&\text{if }n\ge 3,\\\label{eq2}
	&\Vert \tilde f\Vert_{L^r(A)}+\Vert \nabla \tilde f\Vert_{L^2(A)}\le C(A,r)|\DIFF^2 f|(A)\qquad&&\text{if }n= 2,\\\label{eq3}
	&\Vert \tilde f\Vert_{L^\infty(A)}+\Vert \nabla \tilde f\Vert_{L^\infty(A)}\le C(A)|\DIFF^2 f|(A)\qquad&&\text{if }n= 1.
\end{alignat}
\end{prop}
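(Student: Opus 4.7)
My plan is to reduce to a $\BV$-type statement via Proposition \ref{hessiananddiffgrad} and then apply Poincaré–Sobolev inequalities in two layers: the first applied to $\nabla f$, which is a $\BV$ vector field, the second applied to $f$, which becomes a Sobolev function once we know $\nabla f \in L^{n/(n-1)}_\rmloc$.

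Fix a bounded $A \subseteq \Omega$ supporting Poincaré inequalities, set $v \defeq \dashint_A \nabla f \dd\LL^n \in \RR^n$, and apply the $\BV$-version of Poincaré–Sobolev componentwise to $\nabla f \in \BV(A)$ to obtain
$$
\Vert \nabla f - v\Vert_{L^{n/(n-1)}(A)} \le C(A)\,|\DIFF \nabla f|(A) \le C'(A)\,|\DIFF^2 f|(A),
$$
where the second inequality comes from Proposition \ref{hessiananddiffgrad}; the $\BV$-Poincaré inequality itself extends the $W^{1,1}$ case of Definition \ref{defnPoin} by mollification and lower semicontinuity of the total variation. Next, set $c \defeq \dashint_A (f - v\cdot x)\dd\LL^n$, $g(x) \defeq v\cdot x + c$, and $\tilde f \defeq f - g$, so that $\nabla \tilde f$ has a controlled $L^{n/(n-1)}$ norm and $\dashint_A \tilde f\dd\LL^n = 0$.

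For $n \ge 3$ this setup yields \eqref{eq1} directly by invoking the Poincaré–Sobolev inequality of Definition \ref{defnPoin} on $\tilde f \in W^{1,n/(n-1)}(A)$ with $q = n/(n-1) < n$, whose dual exponent is $q^* = n/(n-2)$. The case $n=1$ is immediate: scalar $\BV$ functions on intervals are bounded, so $\nabla \tilde f \in L^\infty(A)$ forces $\tilde f$ Lipschitz, whence \eqref{eq3}.

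The main obstacle is the case $n = 2$. Here the $\BV$ step produces only $\nabla \tilde f \in L^2(A)$, and Poincaré–Sobolev with any $q \in [1,2)$ yields $\tilde f \in L^{q^*}(A)$ with $q^* = 2q/(2-q)$ arbitrarily large but finite, which already covers the $L^r$ part of \eqref{eq2} for each $r < \infty$. To upgrade to $L^\infty$ and obtain a continuous representative, I would invoke the borderline two-dimensional embedding $W^{2,1}(A) \hookrightarrow C^0(\overline A)$ in Poincaré form, namely $\Vert u - g_u\Vert_{L^\infty(A)} \le C(A)\Vert \nabla^2 u\Vert_{L^1(A)}$ for smooth $u$ and an affine $g_u$ built from the averages of $u$ and $\nabla u$. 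Applied to the smooth approximations $f_k$ of Proposition \ref{myersserrin}, this bound on $f_k - g_{f_k}$ becomes uniform in $k$; combined with $L^1(A)$ convergence of $f_k$ to $f$ and the uniform $L^1$ bound on $\nabla f_k$, the coefficients of $g_{f_k}$ converge, so a subsequence converges uniformly to a continuous representative of $\tilde f$ satisfying the required bound.
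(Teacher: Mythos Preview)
Your strategy matches the paper's: reduce to $\nabla f\in\BV$ via Proposition~\ref{hessiananddiffgrad}, then iterate Poincar\'e--Sobolev. The only operational difference is that the paper runs both Poincar\'e steps on the smooth approximations $f_k$ of Proposition~\ref{myersserrin} (invoking Remark~\ref{rempoinc} to handle the a~priori integrability needed to make the averages and the $W^{1,q}(A)$ membership legitimate) and then passes to the limit by lower semicontinuity, whereas you apply a $\BV$-Poincar\'e directly to $\nabla f$ and only fall back on approximations for the borderline $n=2$ case. Both routes deliver \eqref{eq1}--\eqref{eq3}; one small point you gloss over is that $v=\dashint_A\nabla f$ presupposes $\nabla f\in L^1(A)$, which itself needs the $\BV$ analogue of Remark~\ref{rempoinc} rather than just the $\BV$-Poincar\'e inequality.

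There is, however, a genuine gap in your argument for the continuous representative when $n=2$. From the uniform bound $\Vert f_k-g_{f_k}\Vert_{L^\infty(A)}\le C\Vert\nabla^2 f_k\Vert_{L^1(A)}$ together with $L^1$-convergence you correctly obtain $\tilde f\in L^\infty(A)$, but the assertion that ``a subsequence converges uniformly'' does not follow: $L^\infty$-boundedness plus $L^1$-convergence does not force uniform convergence, and the borderline embedding $W^{2,1}(\RR^2)\hookrightarrow C^0$ is \emph{not} compact, so Arzel\`a--Ascoli is unavailable without an equicontinuity estimate you have not supplied. Nor can you apply the $L^\infty$ estimate to differences $f_k-f_j$, since Proposition~\ref{myersserrin} gives no control on $\Vert\nabla^2(f_k-f_j)\Vert_{L^1}$. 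The paper does not attempt this step either; it simply cites \cite[Proposition~3.1]{Deme}, whose proof uses a pointwise argument specific to the two-dimensional borderline case.
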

\begin{proof}

The case $n=1$ is readily proved by direct computation (as, if a domain of $\RR$ supports Poincaré inequality has to be an interval) so that in the following we assume $n\ge 2$. Also, recall that Proposition \ref{hessiananddiffgrad} states that $f\in W^{1,1}_\rmloc(\Omega)$ with $\nabla f\in\BV_\rmloc(\Omega)$. Therefore we can apply \cite[Proposition 3.1]{Deme} to have continuity of $f$ in the case $n=2$, which also implies $L^\infty_\rmloc(\Omega)$ membership.

As balls satisfy Poincaré inequalities, it is enough to establish the estimates of the second part of the claim to conclude. Fix then $A$ and $r$ as in the second part of the statement. 

Let now $\{f_k\}_k$ be given by Proposition \ref{myersserrin} for $f$ on $A$. Iterating  Poincaré inequalities, taking into account Remark \ref{rempoinc}, we obtain affine maps $g_k$ so that, setting $\tilde f_k\defeq f_k-g_k$, $\tilde{f_k}$ satisfies \eqref{eq1} or \eqref{eq2}, depending on $n$. Arguing as for Remark \ref{rempoinc}, we see that $g_k$ is bounded in $L^1(B)$ for any ball $B\subseteq A$. This implies that $g_k$ and $\nabla g_k$ are bounded in $L^\infty(A)$. Therefore, up to extracting a further non relabelled subsequence, $\tilde f_k$ converges in $L_\rmloc^1(A)$ to $f-g$, for an affine function $g$. Lower semicontinuity of the norms at the left hand sides of \eqref{eq1} or \eqref{eq2} allows us to conclude the proof.
\end{proof}
\begin{rem}[Linear extension domains]
Let $n=2$, we keep the same notation as for Proposition \ref{sobo}. Assume also that  $A$ has the following property: there exists an open set $V\subseteq\RR^2$ with $\bar A\subseteq V$ and a bounded linear map $E:W^{1,2}(A)\rightarrow W^{1,2}(V)$ satisfying, for every $u$ with bounded Hessian--Schatten variation (hence $u\in W^{1,2}(A)$ by Proposition \ref{sobo}):
\begin{enumerate}
    \item $E u=u$ a.e.\ on $A$,
    \item $E u$ is supported in $V$,
    \item $|\DIFF^2 E u|(V)\le C|\DIFF^2 u|(A)$ for some constant $C$.
\end{enumerate}
Then we show that \eqref{eq2} can be improved to
$$
\Vert \tilde f\Vert_{L^\infty(A)}+\Vert \nabla \tilde f\Vert_{L^2(A)}\le C|\DIFF_p^2 f|(A),
$$
where we possibly modified the constant $C$.

First, by \eqref{eq2} it holds that $\Vert \tilde f\Vert_{W^{1,2}(A)}\le C|\DIFF^2 f|(A)$. Now take $\psi\in C_c^\infty(\RR^2)$ with support contained in $V$ and such that $\psi=1$ on $A$.
Then we have 
$$|\DIFF^2 (\psi E\tilde f)|(V)\le C\big(|\DIFF^2 (E\tilde f)|(V)+\Vert E\tilde f\Vert_{W^{1,2}(V)} \big)\le C|\DIFF^2 \tilde f|(A).$$
Then we use the continuous representative of $\psi E\tilde f$  as in \cite[Proposition 3.1]{Deme} and, from its very definition, the claim follows.

It is easy to see that $(0,1)^2$ is suitable for the above argument, see Lemma \ref{extlemma} below and its proof.\fr
\end{rem}

\bigskip

The strict convexity of the Schatten $p$-norm, for $p\in(1,+\infty)$ has, as a consequence, the following rigidity result.
\begin{lem}[Rigidity]\label{rigidity} Let $f,g\in L^1_\rmloc(\Omega)$ with bounded Hessian--Schatten variation and assume that 
\begin{equation}\notag
	|\DIFF^2_p (f+g)|(\Omega)=	|\DIFF^2_p f|(\Omega)+	|\DIFF^2_p g|(\Omega).
\end{equation}
Then 
$$
|\DIFF^2_p (f+g)|=	|\DIFF^2_p f|+	|\DIFF^2_p g|
$$
as measures on $\Omega$. If moreover, $p\in (1,+\infty)$, then 
$$\DIFF\nabla f=\rho_f \DIFF\nabla (f+g)\qquad\text{and}\qquad\DIFF\nabla g=\rho_g \DIFF\nabla (f+g) $$ for a (unique) couple $\rho_f,\rho_g\in L^\infty(|\DIFF\nabla (f+g)|)$ such that $0\le \rho_f,\rho_g\le 1$ $|\DIFF\nabla (f+g)|$-a.e. and satisfying $\rho_f+\rho_g=1$ $|\DIFF\nabla (f+g)|$-a.e. In particular, for every $q\in [1,+\infty]$,
$$
|\DIFF^2_q (f+g)|=	|\DIFF^2_q f|+	|\DIFF^2_q g|
$$
as measures on $\Omega$.
\end{lem}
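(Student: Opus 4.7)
The first identity follows from a standard measure-theoretic argument: by linearity in the testing field in Definition \ref{defHSTV}, one has $|\DIFF^2_p(f+g)|(A)\le |\DIFF^2_p f|(A)+|\DIFF^2_p g|(A)$ for every open $A\subseteq\Omega$, hence (by outer regularity of finite Borel measures) the same inequality holds on every Borel subset of $\Omega$. The hypothesized equality of total masses then forces the nonnegative Borel measure $|\DIFF^2_p f|+|\DIFF^2_p g|-|\DIFF^2_p(f+g)|$ to be identically zero.

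For the second part with $p\in(1,+\infty)$, the plan is to disintegrate all three gradient-Hessian measures against the common scalar reference $\nu\defeq |\DIFF\nabla(f+g)|$ and then exploit strict convexity of the Schatten $p$-norm. By Proposition \ref{hessiananddiffgrad} and the measure identity just proved, $|\DIFF\nabla f|$ and $|\DIFF\nabla g|$ are absolutely continuous with respect to $\nu$, so one may write $\DIFF\nabla f=F_f\,\nu$, $\DIFF\nabla g=F_g\,\nu$, and $\DIFF\nabla(f+g)=F\,\nu$ for matrix-valued densities satisfying $F=F_f+F_g$ and $F\neq 0$ $\nu$-a.e. Plugging these expressions into \eqref{comptv} converts $|\DIFF^2_p h|$ into $|F_h|_p\,\nu$ for each $h\in\{f,g,f+g\}$, so that the additivity from the first part becomes the pointwise identity $|F_f(x)|_p+|F_g(x)|_p=|F(x)|_p$ for $\nu$-a.e.\ $x$.

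This is equality in the triangle inequality for a strictly convex norm, which forces $F_f(x)$ and $F_g(x)$ to be nonnegative scalar multiples of $F(x)$ at every point where $F(x)\neq 0$, hence $\nu$-a.e. Defining $\rho_f(x)$ by $F_f(x)=\rho_f(x)F(x)$ and $\rho_g\defeq 1-\rho_f$ yields the required pair in $L^\infty(\nu)=L^\infty(|\DIFF\nabla(f+g)|)$ with values in $[0,1]$ summing to one, uniqueness being automatic since $F$ is $\nu$-a.e.\ nonzero; the representation $\DIFF\nabla f=\rho_f\,\DIFF\nabla(f+g)$ (and analogously for $g$) is then immediate. Finally, the assertion for arbitrary $q\in[1,+\infty]$ drops out by homogeneity: the identity $|\DIFF^2_q h|=|F_h|_q\,\nu$ together with $\rho_f,\rho_g\ge 0$ and $\rho_f+\rho_g=1$ gives
\begin{equation*}
|\DIFF^2_q f|+|\DIFF^2_q g|=(\rho_f+\rho_g)|F|_q\,\nu=|\DIFF^2_q(f+g)|.
\end{equation*}
The main delicate point is the passage from the equality of measures to a pointwise norm identity at the level of the matrix densities; this is handled cleanly by anchoring all three measures to the common scalar reference $\nu$ and using \eqref{comptv}, after which the strict convexity of the Schatten $p$-norm for $p\in(1,+\infty)$ does the rigid work.
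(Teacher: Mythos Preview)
Your proof is correct and follows essentially the same route as the paper's: both reduce to a pointwise equality case in the triangle inequality for the Schatten $p$-norm and invoke strict convexity. The only cosmetic difference is that the paper works with the polar decompositions $\DIFF\nabla f=M_p|\DIFF^2_p f|$, $\DIFF\nabla g=N_p|\DIFF^2_p g|$, $\DIFF\nabla(f+g)=O_p|\DIFF^2_p(f+g)|$ (unit-norm matrix parts) and obtains $\rho_f M_p+\rho_g N_p=O_p$, whereas you disintegrate the unnormalized matrix densities against the common reference $\nu=|\DIFF\nabla(f+g)|$ and obtain $|F_f|_p+|F_g|_p=|F|_p$ directly; the underlying argument is the same.
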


\begin{proof}
	The first claim follows from the triangle inequality and the equality in the assumption.
Now assume $p\in(1,+\infty)$. Take then $\rho_f$ and $\rho_g$, the Radon--Nikodym derivatives:
	$$
	|\DIFF^2_p f|=\rho_f	|\DIFF^2_p (f+g)|\qquad	\text{and}\qquad|\DIFF^2_p g|=\rho_g	|\DIFF^2_p (f+g)|
	$$
	as measures on $\Omega$, where $\rho_f+\rho_g=1 \ |\DIFF^2_p (f+g)|$-a.e.
	We can apply Proposition \ref{hessiananddiffgrad} and write the polar decompositions
	$\DIFF\nabla f=M_p|\DIFF_p^2 f|$, $\DIFF\nabla g=N_p|\DIFF_p^2 g|$ and  $\DIFF\nabla (f+g)=O_p|\DIFF\nabla (f+g)|$ where $|M_p|_p,|N_p|_p,|O_p|_p$ are  identically  equal to $1$. Therefore
	$\DIFF\nabla f=M_p\rho_f|\DIFF_p^2 (f+g)|$, $\DIFF\nabla g=N_p\rho_g|\DIFF_p^2 (f+g)|$ and  $\DIFF\nabla (f+g)=O_p|\DIFF_p^2 (f+g)|$ and by linearity we obtain that
	$$M_p\rho_f|\DIFF_p^2 (f+g)|+N_p\rho_g|\DIFF_p^2 (f+g)|=O_p|\DIFF_p^2 (f+g)|$$
which implies that $\rho_f M_p+\rho_g N_p=O_p\ |\DIFF_p^2 (f+g)|$-a.e. Taking $p$-Schatten norms, $$1=|O_p|_p=|\rho_f M_p+\rho_g N_p|_p\le \rho_f| M_p|_p+\rho_g| N_p|_p=1\qquad |\DIFF_p^2 (f+g)|\text{-a.e.}$$
which implies the claim by strict convexity. The last assertion is due to Proposition \ref{hessiananddiffgrad}.	
\end{proof}

\subsection{Boundary Extension}
\cite[Theorem 2.2]{Deme} provides us with an extension operator for bounded domains with $C^2$ boundary. However, we need the result for parallelepipeds. This can be obtained following  \cite[Remark 2.1]{Deme}. However, we sketch the argument as we are going also to need a slightly more refined result compared to the one stated in \cite{Deme}. This extension result (namely, its corollary Proposition \ref{bellapprox}) will play a key role in the proof of Theorem \ref{mainthm} below. 

\begin{lem}\label{parall}
	Let $\Omega=(a_0,a_1)\times\Omega'$ be a parallelepiped in $\RR^n$ and let $f\in L^1_\rmloc(\Omega)$ with bounded Hessian--Schatten variation in $\Omega$. Then, if we set $$\tilde{\Omega}\defeq(a_0-(a_1-a_0)/2,a_1)\times\Omega',$$ there exists $\tilde{f}\in L^1_\rmloc(\tilde{\Omega})$ with bounded Hessian--Schatten variation in $\tilde\Omega$ such that $\tilde{f}=f$ a.e.\ on $\Omega$,\begin{equation}\label{claim1}
		|\DIFF^2 \tilde{f}|(\{a_0\}\times \Omega')=0 
	\end{equation} and \begin{equation}\label{claim2}
		|\DIFF^2\tilde{f}|(\tilde{\Omega})\le C	|\DIFF^2{f}|(\Omega),
	\end{equation}
	where $C$ is a constant that does not depend on $f$.
\end{lem}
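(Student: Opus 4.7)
My plan is to define $\tilde f$ by a classical second-order reflection across the face $\{a_0\}\times\Omega'$: keep $\tilde f=f$ on $\Omega$ and, for $t\in(a_0-(a_1-a_0)/2,a_0)$ and $x'\in\Omega'$, set
$$
\tilde f(t,x')\defeq 3 f(2a_0-t,x')-2 f(3a_0-2t,x').
$$
The contraction factors $(\lambda_1,\lambda_2)=(1,2)$ are chosen so that the reflected arguments $2a_0-t$ and $3a_0-2t$ remain in $(a_0,a_1)$ precisely when $t>a_0-(a_1-a_0)/2$; this is exactly why the extension slab in the statement has width $(a_1-a_0)/2$. The coefficients $(3,-2)$ are the unique solution of $a+b=1$ and $a\lambda_1+b\lambda_2=-1$, i.e.\ the two-by-two system that enforces continuity of both $\tilde f$ and $\partial_t\tilde f$ across the interface.

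I would first prove the two estimates under the additional assumption $f\in C^\infty(\bar\Omega)$. In this case the matching of coefficients yields $\tilde f\in C^1(\tilde\Omega)$, so by Proposition \ref{hessiananddiffgrad} the measure $|\DIFF^2\tilde f|$ assigns no mass to the hyperplane $\{a_0\}\times\Omega'$, giving \eqref{claim1}. On the extension slab the entries of $\nabla^2\tilde f$ are linear combinations, with bounded coefficients, of the entries of $\nabla^2 f$ evaluated at the reflected points $T_k(t,x')\defeq(a_0+\lambda_k(a_0-t),x')$: the second-order chain rule introduces factors $\lambda_k^2\le 4$ on the pure $tt$-entry and $\lambda_k\le 2$ on the mixed $tx'_j$-entries. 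Since conjugation by $\mathrm{diag}(-\lambda_k,1,\dots,1)$ distorts the Schatten $p$-norm by a factor $\le 4$, one obtains pointwise $|\nabla^2\tilde f(t,x')|_p\le C\sum_{k=1}^2|\nabla^2 f(T_k(t,x'))|_p$ for a universal $C$. Integrating over the extension slab and performing the affine change of variables $s=a_0+\lambda_k(a_0-t)$ (with Jacobian $1$ for $k=1$ and $1/2$ for $k=2$) sends both integrals into $\int_\Omega|\nabla^2 f|_p\dd\LL^n$, which proves \eqref{claim2} in the smooth case.

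To remove the smoothness assumption I would apply Proposition \ref{myersserrin} (possibly combined with a preliminary extension of $f$ to a neighborhood of $\bar\Omega$ followed by mollification, using Lemma \ref{mollif} to control the Hessian-Schatten variation of the mollified function) to obtain smooth $f_k\to f$ in $L^1_\rmloc(\Omega)$ with $\int_\Omega|\nabla^2 f_k|_p\dd\LL^n\to|\DIFF^2 f|(\Omega)$. The reflection formula defines $\tilde f_k$, and these converge to $\tilde f$ in $L^1_\rmloc(\tilde\Omega)$; the smooth-case bound and the lower semicontinuity of $|\DIFF^2\cdot|$ under $L^1_\rmloc$ convergence immediately give \eqref{claim2}. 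The main obstacle is preserving \eqref{claim1} in the limit, since naive lower semicontinuity only controls the total mass of $|\DIFF^2\tilde f|$, not its restriction to the hyperplane. I would overcome this by exploiting that for each $k$ the two $\BV$-traces of $\nabla\tilde f_k$ on $\{a_0\}\times\Omega'$ coincide by construction, so $|\DIFF\nabla\tilde f_k|(\{a_0\}\times\Omega')=0$ uniformly in $k$; upgrading the convergence to strict convergence of $|\DIFF\nabla\tilde f_k|$ on a neighborhood of the interface (which follows from $\int|\nabla^2\tilde f_k|_p\dd\LL^n\to|\DIFF^2\tilde f|$ combined with the smooth-case identity), the trace equality passes to $\nabla\tilde f$, so the jump part of $\DIFF\nabla\tilde f$ on the hyperplane vanishes and \eqref{claim1} follows via Proposition \ref{hessiananddiffgrad}.
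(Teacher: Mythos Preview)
Your reflection formula is exactly the one the paper uses, and your analysis of why the coefficients $(3,-2)$ and contraction factors $(1,2)$ are chosen is correct. The divergence from the paper is in how you handle the general (non-smooth) $f$.

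The paper does \emph{not} approximate. It defines $\tilde f$ directly for the given $f$, observes via Proposition~\ref{hessiananddiffgrad} that $\nabla f\in\BV_\rmloc(\Omega)$, and hence that $\nabla\tilde f$ is $\BV$ on each of the two open halves $\Omega_1,\Omega_2$ of $\tilde\Omega$. The choice of coefficients forces the two one-sided $\BV$-traces of $\nabla\tilde f$ on $\{a_0\}\times\Omega'$ to agree, so the gluing theorem for $\BV$ functions (\cite[Theorem~3.87 and Corollary~3.89]{AFP00}) gives $|\DIFF\nabla\tilde f|(\{a_0\}\times\Omega')=0$ directly, which is \eqref{claim1}. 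The bound \eqref{claim2} then follows from the affine change of variables on $\Omega_2$, again with no approximation.

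Your approximation route has two genuine gaps. First, the smooth case you prove requires $f\in C^\infty(\bar\Omega)$ (otherwise the reflected function need not be $C^1$ across the interface, and $\nabla^2 f_k$ may not even be integrable near $\{a_0\}\times\Omega'$). Proposition~\ref{myersserrin} only produces $f_k\in C^\infty(\Omega)$; your suggestion of a ``preliminary extension of $f$ to a neighbourhood of $\bar\Omega$'' is precisely what Lemma~\ref{parall} is meant to supply, so invoking it here is circular. Second, your mechanism for passing \eqref{claim1} to the limit relies on strict convergence $|\DIFF\nabla\tilde f_k|(\tilde\Omega)\to|\DIFF\nabla\tilde f|(\tilde\Omega)$, but the information available only gives $\limsup_k|\DIFF^2\tilde f_k|(\tilde\Omega)\le C|\DIFF^2 f|(\Omega)$ and $\liminf_k|\DIFF^2\tilde f_k|(\tilde\Omega)\ge|\DIFF^2\tilde f|(\tilde\Omega)$; there is no reason these should match, so strict convergence does not follow. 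Notice also that in your last sentence you end up appealing to the $\BV$-trace theory anyway --- at that point you might as well apply it directly to $\nabla\tilde f$, as the paper does, and dispense with the approximation entirely.
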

\begin{proof}
	Up to a linear change of coordinates, we can assume that $\Omega=(0,1)^n$. Set $\Omega_1=\Omega$ and $\Omega_2=(-1/2,0)\times(0,1)^{n-1} = M(\Omega)$, for $M(x,y)\defeq (-x/2,y)$, where we use coordinates $\RR\times \RR^{n-1}\ni(x,y)$ for $\RR^n$. Set also
	\begin{equation}\notag
		\tilde{f}(x,y)\defeq
		\begin{cases}
			f(x,y)\qquad&\text{if }(x,y)\in \Omega_1,\\
			3 f(-x,y)-2 f(-2x,y)\qquad&\text{if }(x,y)\in  \Omega_2.
		\end{cases}
	\end{equation}
	An application of the theory of traces (\cite[Theorem 3.87 and Corollary 3.89]{AFP00}) together with Proposition \ref{hessiananddiffgrad}  yields that $|\DIFF\nabla \tilde f|(\partial\Omega_1\cap\partial\Omega_2)=0$, hence \eqref{claim1}. Then, we compute
$$|\DIFF^2\tilde f|(\Omega_1\cup\Omega_2\cup (\partial\Omega_1\cap  \partial\Omega_2))= |\DIFF^2\tilde f|(\Omega_1)+|\DIFF^2\tilde f|(\Omega_2)\le C |\DIFF^2 f|(\Omega_1),$$
	where $C$ is a constant, so that \eqref{claim2} follows.
\end{proof}

\begin{lem}\label{extlemma}
	Let $\Omega=(0,1)^n$ and let $f\in L_\rmloc^1(\Omega)$ with bounded Hessian--Schatten variation in $\Omega$. Then there exist a neighbourhood 
	$\tilde{\Omega}$ of $\bar{\Omega}$ and $\tilde{f}\in L_\rmloc^1(\tilde{\Omega})$ with bounded Hessian--Schatten variation in $\tilde\Omega$ such that 
	\begin{equation}\label{claim3}
		|\DIFF^2 \tilde f|(\partial \Omega)=0
	\end{equation}
	and
	\begin{equation}\label{claim5}
	|\DIFF^2 \tilde f|(\tilde{\Omega})\le C	|\DIFF^2  f|(\Omega),
	\end{equation}
	where $C$ is a constant that does not depend on $f$.
\end{lem}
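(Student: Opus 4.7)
The plan is to apply Lemma \ref{parall} iteratively $2n$ times, once for each of the $2n$ faces of the cube $(0,1)^n$, so as to extend $f$ across every face in turn. Although Lemma \ref{parall} is stated for the ``left'' face of a parallelepiped in the first coordinate, by an obvious reflection and permutation of coordinates it applies equally well across every face of any axis-aligned parallelepiped, enlarging it by appending a half-thickness slab on the chosen side.

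More precisely, I would set $\Omega^{(0)} \defeq (0,1)^n$ and $f^{(0)} \defeq f$, and at step $k \in \{1, \dots, 2n\}$ apply Lemma \ref{parall} to $f^{(k-1)}$ on $\Omega^{(k-1)}$ to extend across one of the $2n$ faces of the original cube not yet handled, obtaining $\Omega^{(k)} \supsetneq \Omega^{(k-1)}$ and $f^{(k)} \in L^1_\rmloc(\Omega^{(k)})$ with bounded Hessian--Schatten variation, $f^{(k)} = f^{(k-1)}$ a.e.\ on $\Omega^{(k-1)}$, $|\DIFF^2 f^{(k)}|(F_k) = 0$ for the open face $F_k$ extended across at step $k$, and $|\DIFF^2 f^{(k)}|(\Omega^{(k)}) \le C|\DIFF^2 f^{(k-1)}|(\Omega^{(k-1)})$. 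After $2n$ steps, $\Omega^{(2n)} = (-1/2, 3/2)^n$ is a neighborhood of $[0,1]^n$; setting $\tilde\Omega \defeq \Omega^{(2n)}$ and $\tilde f \defeq f^{(2n)}$, the bound \eqref{claim5} follows immediately by iterating the variation estimate, with the accumulated constant $C^{2n}$ depending only on $n$.

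The crux is \eqref{claim3}. By locality of the distributional Hessian, whenever $j \ge k$, the function $f^{(j)}$ agrees with $f^{(k)}$ on the open set $\Omega^{(k)}$, hence $|\DIFF^2 f^{(j)}|$ restricted to $\Omega^{(k)}$ equals $|\DIFF^2 f^{(k)}|$. Since $F_k \subset \Omega^{(k)}$ (the extension at step $k$ opens $F_k$ into the interior of the new parallelepiped), this yields $|\DIFF^2 \tilde f|(F_k) = |\DIFF^2 f^{(k)}|(F_k) = 0$. The topological boundary of the cube decomposes as $\partial \Omega = \bigcup_{k=1}^{2n} \bar F_k$, and each residual piece $\bar F_k \setminus F_k$ is contained in the $(n{-}2)$-skeleton of the cube. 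For $n \ge 2$, Proposition \ref{hessiananddiffgrad} identifies $|\DIFF^2 \tilde f|$ up to a bounded factor with the total variation $|\DIFF \nabla \tilde f|$ of the $\BV_\rmloc$ vector field $\nabla \tilde f$, and such measures do not charge any set of $\HH^{n-1}$-measure zero; the one-dimensional case is covered directly since then each $\bar F_k = F_k$ is a single point. Summing gives $|\DIFF^2 \tilde f|(\partial \Omega) \le \sum_k |\DIFF^2 \tilde f|(\bar F_k) = 0$.

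The main potential obstacle is the bookkeeping of the iterated construction: one must ensure that extending across later faces does not reintroduce Hessian--Schatten mass on the previously handled ones. This is handled by the locality argument above, together with the geometric observation that every face, once its extension step is performed, lies strictly inside the current parallelepiped, while subsequent steps only enlarge the domain away from it.
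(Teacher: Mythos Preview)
Your proof is correct and follows essentially the same approach as the paper's, which simply says to apply Lemma~\ref{parall} repeatedly, once per side, noting that at each step the current domain is a parallelepiped containing $\Omega$. You have filled in the details the paper leaves implicit---in particular the locality argument ensuring later extensions do not spoil earlier ones, and the observation that the $(n{-}2)$-skeleton is $\HH^{n-1}$-null and hence not charged by $|\DIFF\nabla\tilde f|$---all of which are sound.
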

\begin{proof}
	Apply several times (a suitable variant) of Lemma \ref{parall}, extending $\Omega$ along each side. Notice that at each step, we are extending a parallelepiped which contains $\Omega$.
\end{proof}
\begin{prop}\label{bellapprox}
	Let $\Omega=(0,1)^n$ and let $f\in L_\rmloc^1(\Omega)$ with bounded Hessian--Schatten variation in $\Omega$. Then there exists a sequence $\{f_k\}_k\subseteq  C^\infty(\tilde{\Omega})$, where $\tilde{\Omega}$ is a neighbourhood of $\bar{\Omega}$ such that 
	\begin{equation}
		\begin{split}
			f_k&\rightarrow f\qquad\text{in }L^1(\Omega)\\
			|\DIFF^2_p f_k|(\Omega)&\rightarrow| \DIFF^2_p f|(\Omega)
		\end{split}
	\end{equation}
	for any $p\in [1,+\infty]$.
\end{prop}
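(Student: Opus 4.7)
The plan is to combine the extension lemma with standard mollification, exploiting the crucial boundary property \eqref{claim3} to upgrade the trivial lower semicontinuity bound into an equality.

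First I would invoke Lemma \ref{extlemma} to obtain an extension $\tilde f\in L^1_{\rmloc}(V)$ on an open neighbourhood $V$ of $\bar\Omega$, with $\tilde f=f$ a.e.\ on $\Omega$, bounded Hessian--Schatten variation on $V$, and $|\DIFF^2\tilde f|(\partial\Omega)=0$. By the norm equivalence asserted right after Definition \ref{defHSTV}, the vanishing of the boundary measure also holds for $|\DIFF^2_p\tilde f|$ for every $p\in[1,+\infty]$. Fix an open set $\tilde\Omega$ with $\bar\Omega\subseteq\tilde\Omega\Subset V$, and pick $\epsilon_0>0$ so small that $B_{\epsilon_0}(\tilde\Omega)\subseteq V$. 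Let $\{\rho_k\}_k$ be a family of standard Friedrichs mollifiers with $\supp\rho_k\subseteq B_{1/k}(0)$, and set $f_k\defeq\rho_k\ast\tilde f\in C^\infty(\tilde\Omega)$ for all $k$ large enough.

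The convergence $f_k\to f$ in $L^1(\Omega)$ is standard: since $\bar\Omega\subseteq V$ is compact, $\tilde f\in L^1$ on a neighbourhood of $\bar\Omega$, so $\rho_k\ast\tilde f\to\tilde f=f$ in $L^1(\Omega)$. For the convergence of the Hessian--Schatten variation I would argue as follows. By the lower semicontinuity of $|\DIFF^2_p\cdot|$ under $L^1_\rmloc$ convergence (recalled after Definition \ref{defHSTV}),
\[
|\DIFF^2_p f|(\Omega)\le\liminf_{k\to\infty}|\DIFF^2_p f_k|(\Omega).
\]
For the matching upper bound, Lemma \ref{mollif} applied to $\tilde f$ on $V$ with convolution kernel $\rho_k$ gives, for $k$ large,
\[
|\DIFF^2_p f_k|(\Omega)\le|\DIFF^2_p\tilde f|(B_{1/k}(\Omega)).
\]
Since $|\DIFF^2_p\tilde f|$ is a finite Borel measure on any open set with compact closure in $V$, and since $\bigcap_k B_{1/k}(\Omega)=\bar\Omega$ with $|\DIFF^2_p\tilde f|(\partial\Omega)=0$ and $|\DIFF^2_p\tilde f|(\Omega)=|\DIFF^2_p f|(\Omega)$ (because $\tilde f=f$ on $\Omega$), monotone continuity of the measure from above yields
\[
\limsup_{k\to\infty}|\DIFF^2_p f_k|(\Omega)\le\lim_{k\to\infty}|\DIFF^2_p\tilde f|(B_{1/k}(\Omega))=|\DIFF^2_p\tilde f|(\bar\Omega)=|\DIFF^2_p f|(\Omega),
\]
which closes the argument.

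I do not anticipate any serious obstacle: all the work has been pushed into Lemma \ref{extlemma} (whose conclusion \eqref{claim3} is tailor-made for this proof) and Lemma \ref{mollif}. The only point requiring a little care is the interplay between the different exponents $p$, which is handled uniformly by the equivalence of the measures $|\DIFF^2_p\cdot|$ and $|\DIFF^2_q\cdot|$ coming from Proposition \ref{hessiananddiffgrad}, guaranteeing that the crucial property $|\DIFF^2_p\tilde f|(\partial\Omega)=0$ holds simultaneously for all $p\in[1,+\infty]$.
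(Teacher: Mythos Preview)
Your proposal is correct and follows exactly the same approach as the paper: extend via Lemma \ref{extlemma}, mollify, and sandwich using lower semicontinuity on one side and Lemma \ref{mollif} together with the boundary property \eqref{claim3} on the other. The paper's own proof is a one-line sketch of precisely this argument, so your write-up is simply a faithful expansion of it.
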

\begin{proof}
	Take $\tilde{f}$ as in  Lemma \ref{extlemma} and, if $\{\rho_k\}_k$ is a sequence of Friedrich mollifiers, set $f_k\defeq\tilde{f}\ast\rho_k$. The claim follows from lower semicontinuity and Lemma \ref{mollif}.
\end{proof}

\section{A Density Result for {\rm CPWL} Functions}\label{sec:CPWL}
In this section, we study the density of  CPWL functions in the unit ball of the HTV functional.  As usual, we let $\Omega\subseteq\RR^n$ open and $p\in[1,+\infty]$.
\subsection{Definitions and The Main Result}
\begin{defn}
	We say that $f\in C(\Omega)$ belongs to $\rm CPWL(\Omega)$ if there exists a decomposition $\{P_k\}_k$ of $\RR^n$ in $n$-dimensional convex polytopes (with convex polytope we mean the closed convex hull of finitely many points), intersecting only at their boundaries such that for every $k$, $f_{|P_k\cap \Omega}$ is affine and such that for every ball $B$, only finitely many $P_k$ intersect $B$. 
\end{defn}
Notice that $\rm CPWL$ functions defined on bounded sets have automatically finite Hessian--Schatten variation, by Proposition \ref{hessiananddiffgrad}.

In the particular case $n=2$, we can and will assume that the convex polytopes $\{P_k\}_k$ as in the definition of ${\rm CPWL}$ function are triangles.

\begin{rem}\label{cpwlrem}
Let $f\in {\rm CPWL}(\Omega)$, where $\Omega\subseteq\RR^n$ is open.  Notice that $\nabla f$ is constant on each $P_k$, call this constant $a_k$.

 Thanks to Proposition~\ref{hessiananddiffgrad}, we can deal with $|\DIFF_p^2f|$ and $|\DIFF\nabla f|$ exploiting the theory of vector valued functions of bounded variation \cite{AFP00}. In particular, $|\DIFF\nabla f|$ will charge only $1$-codimensional faces of $P_k$. Then, take a non degenerate face $\sigma=P_k\cap P_{k'}$ for $k\ne k'$ (i.e.\ $\sigma$ is the common face of $P_k$ and $P_{k'}$). Then   the Gauss-Green Theorem gives $\DIFF\nabla f\mres \sigma=(a_{k'}-a_{k})\otimes \nu \HH^{n-1}\mres \sigma$, where $\nu$ is the unit normal to $\sigma$ going from $P_k$ to $P_{k'}$ (hence $(a_k-a_{k'})\perp\sigma$). Then,
\begin{equation}\label{eq:HTVcpwl}
|\DIFF^2 f|\mres \sigma =|(a_{k'}-a_{k})\otimes \nu|_1 \HH^{n-1}\mres \sigma=|a_{k'}-a_{k}|\HH^{n-1}\mres \sigma,
\end{equation}
where, as usual, $|a_{k'}-a_{k}|$ denotes the Euclidean norm.   Let us remark that \eqref{eq:HTVcpwl} has also been shown in \cite{Aziznejad2021HSTV},  directly relying on Definition~\ref{defHSTV}, which paved the way of developing numerical schemes for learning CPWL functions \cite{campos2021HTV,pourya2022delaunay}. 
Since $\dv{\DIFF\nabla f}{|\DIFF\nabla f|}$ has rank one $|\DIFF\nabla f|$-a.e.\ we obtain also
$$\bigg|\dv{\DIFF\nabla f}{|\DIFF\nabla f|}\bigg|_p=\bigg|\dv{\DIFF\nabla f}{|\DIFF\nabla f|}\bigg|=1\qquad|\DIFF\nabla f|\text{-a.e.}$$
\ (we recall that the matrix norm $|\,\cdot\,|$ without any subscript denotes the Hilbert-Schmidt norm).
It follows from \eqref{comptv} that $|\DIFF_p^2 f|=|\DIFF\nabla f|$ for every $p\in [1,+\infty]$, in particular, $|\DIFF_p^2 f|$ is independent of $p$.   

Notice also that the rank one structure of $\DIFF\nabla f$ is a particular case of the celebrated Alberti's theorem  \cite{alberti_1993}, for vector-valued $\rm BV$ functions. According to this theorem the rank one structure holds for the singular part of the distributional derivative.  \fr 
\end{rem}

\bigskip

The following theorem  on the density of CPWL functions is the main theoretical  result of this paper. Its proof is deferred to Section \ref{proofmain}.
In view of it, notice that by Lemma \ref{extlemma} together with Theorem \ref{sobo}, if $f\in L^1_\rmloc((0,1)^2)$ has bounded Hessian--Schatten variation in $(0,1)^2$, then $f\in L^\infty((0,1)^2)$. Also, notice that the statement of the theorem is for $p=1$ only. This will be discussed in the forthcoming Remark \ref{cdsmkocs}.

\begin{thm}\label{mainthm}
Let $n=2$, let $\Omega=(0,1)^2$ and let $p=1$. Then $\rm CPWL(\Omega)$ functions are dense in energy $|\DIFF_1^2\,\cdot\,|(\Omega)$ in $$\left\{f\in L^1_\rmloc(\Omega): f\text{ has bounded Hessian--Schatten variation}\right\}$$
with respect to the $L^\infty(\Omega)$ topology.   Namely, for any $f\in L^1_\rmloc(\Omega)$ with bounded Hessian-Schatten variation in $\Omega$
there exist $f_k\in \rm CPWL(\Omega)$ convergent in $L^\infty(\Omega)$ to $f$ with $|\DIFF^2_1 f_k|(\Omega)$ convergent to
$|\DIFF^2_1 f|(\Omega)$.
\end{thm}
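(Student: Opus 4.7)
The strategy is to reduce to smooth $f$ via Proposition \ref{bellapprox} and then to construct CPWL approximations whose Schatten-$1$ total variation matches that of $f$ in the limit. Proposition \ref{bellapprox} furnishes $f_m \in C^\infty(\tilde\Omega)$ with $f_m \to f$ in $L^1(\Omega)$ and $|\DIFF^2_1 f_m|(\Omega) \to |\DIFF^2_1 f|(\Omega)$; Proposition \ref{sobo} gives $f \in L^\infty(\Omega)$, and a further mollification upgrades the convergence to $L^\infty(\Omega)$. In view of lower semicontinuity of $|\DIFF^2_1 \cdot|$, a diagonal argument reduces the theorem to the case of $g \in C^\infty(\tilde\Omega)$: it suffices to produce CPWL functions $g_k \to g$ in $L^\infty(\Omega)$ with $\limsup_k |\DIFF^2_1 g_k|(\Omega) \le \int_\Omega |\nabla^2 g|_1 \dd\LL^2$.

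The model local case is a pure quadratic $q(x) = \tfrac{1}{2} x^T H x$ with symmetric $H = \lambda_1 v_1 v_1^T + \lambda_2 v_2 v_2^T$, which separates as $q = q_1(x\cdot v_1) + q_2(x\cdot v_2)$. Interpolating each $q_i$ linearly on a fine one-dimensional mesh of step $h'$ and summing yields a CPWL function $\tilde q$ on a rectangular grid aligned with $v_1, v_2$ (each unit cell split by a diagonal, across which the gradient does not jump by separability). A direct calculation combined with Remark \ref{cpwlrem}, which identifies the Schatten-$1$ contribution of each edge with the Euclidean length of the corresponding gradient jump, gives
\[
|\DIFF^2_1 \tilde q|(A) \longrightarrow \int_A (|q_1''| + |q_2''|)\dd\LL^2 = (|\lambda_1| + |\lambda_2|)\,\LL^2(A) = \int_A |H|_1 \dd\LL^2
\]
as $h' \to 0$, on any rectangle $A$ aligned with $v_1, v_2$. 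This is precisely where the restriction $p=1$ is used: CPWL gradient jumps are rank one and carry the same norm for every $p$ (property 5 of the Schatten norms), and only $p=1$ matches the target norm on rank-two Hessians.

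For general smooth $g$ the eigendirections of $\nabla^2 g$ vary with $x$, so the plan is to partition $\Omega$ into small squares $\{Q_i\}$ of side $h$ and, on each $Q_i$, apply the quadratic construction to the Taylor model $q_i(x) = g(x_i) + \nabla g(x_i)\cdot(x-x_i) + \tfrac{1}{2}(x-x_i)^T \nabla^2 g(x_i)(x-x_i)$ on a sub-mesh of size $h' \ll h$ aligned with the eigendirections of $\nabla^2 g(x_i)$. Uniform continuity of $\nabla^2 g$ on $\tilde\Omega$ then yields $|\DIFF^2_1 \tilde q_i|(Q_i) = \int_{Q_i} |\nabla^2 g|_1 \dd\LL^2 + o(h^2)$, and summing over $i$ gives the desired global bound.

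The expected main obstacle is the gluing: the local pieces $\tilde q_i$ use sub-meshes of different orientations and do not agree across $\partial Q_i$. A natural remedy is to insert a thin buffer strip of width $\eta \ll h'$ along each interface and linearly blend the two adjacent pieces there. Since neighbouring pieces both approximate $g$ in $C^1$, the extra gradient jumps across buffer edges are small, and a careful ordering of the limits $\eta \to 0$, $h' \to 0$, $h \to 0$ makes the total interface contribution vanish. Carrying out this gluing rigorously, while preserving the $L^\infty$ convergence and the global continuity (hence CPWL property) of the interpolant, is the technical core of the theorem; once achieved, the preceding steps deliver the required approximation in both $L^\infty(\Omega)$ and energy.
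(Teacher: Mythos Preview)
Your high-level strategy agrees with the paper's: reduce to a smooth function via Proposition~\ref{bellapprox}, partition $\Omega$ into small squares, align on each square a fine sub-mesh with the local eigendirections of $\nabla^2 g$, and show that the interior contributions reproduce $\int|\nabla^2 g|_1$ while the boundary contributions are negligible. You also correctly identify that the rank-one structure of the jump part (Remark~\ref{cpwlrem}) is what singles out $p=1$.

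However, you explicitly leave the gluing undone (``carrying out this gluing rigorously\ldots\ is the technical core''), and the mechanism you suggest for it is both different from the paper's and problematic. You propose to interpolate the local Taylor quadratic $q_i$ on each $Q_i$ and then linearly blend neighbouring pieces across a buffer strip of width $\eta$. Two issues arise. First, a convex combination of two CPWL functions on \emph{different} triangulations is not CPWL in general, so ``linear blending'' is not a well-defined CPWL operation without further work. Second, since $q_i\neq q_j$ on $\partial Q_i\cap\partial Q_j$, the mismatch is of size $O(h^2)$ in value and $O(h)$ in gradient; controlling the resulting extra Hessian--Schatten mass across $O(h^{-1})$ interfaces, each subdivided at scale $h'$, requires an argument you have not supplied.

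The paper avoids this difficulty by two devices you do not mention. First, it interpolates $g$ itself (not the Taylor quadratics) on a single global triangulation $\Gamma^K$ of $\Omega$; since all pieces share the same function values at shared vertices, the interpolant is automatically continuous and there is nothing to blend. Second, to make the tilted sub-grids on adjacent squares compatible along the common edge, the paper perturbs the diagonalizing rotations to have \emph{rational} entries (Step~4) and chooses the mesh sizes $h_k^K$ so that the intersection points on $\partial Q_k$ are equally spaced with a spacing independent of $k$ (Step~6). The transition between the axis-aligned square boundary and the tilted interior grid is then handled by an explicit self-similar triangulation (Step~7) whose angles are bounded below uniformly in $K$; this angle bound is what makes the coarse estimate (item~\ref{it1} of Step~8) work near the interfaces. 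These three ingredients---rational rotations, interpolation of $g$ rather than of local Taylor models, and the self-similar transition triangulation---constitute the actual content of the gluing, and none of them appears in your proposal.
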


\begin{rem}\label{cdsmkocs}
Theorem \ref{mainthm} shows in particular density in energy $|\DIFF^2_1\,\cdot\,|(\Omega)$ of ${\rm CPWL}(\Omega)$ functions with respect to the $L^1_\rmloc(\Omega)$ convergence. Notice that this conclusion is false if we take instead the $|\DIFF\nabla\,\cdot\,|(\Omega)$ seminorm,  and this provides one more theoretical justification of the relevance of the Schatten $1$-norm.

We now justify this claim. By Remark \ref{cpwlrem}, it is easy to realize that the two seminorms above coincide for ${\rm CPWL}(\Omega)$ functions, but are, in general, different for arbitrary functions. For example, take $f((x,y))\defeq \frac{x^2+y^2}{2}$. Then $|\DIFF^2_1 f|=2\mathcal{L}^2$,  whereas $|\DIFF\nabla f|=\sqrt{2}\mathcal{L}^2$. Now assume by contradiction that there exists a sequence $\{f_k\}_k\subseteq{\rm CPWL}(\Omega)$ such that $f_k\rightarrow f$ in $L^1_\rmloc(\Omega)$ and  $|\DIFF\nabla f_k|(\Omega)\rightarrow|\DIFF\nabla f|(\Omega)$. Then
\begin{align*}
    \liminf_k|\DIFF \nabla f_k|(\Omega)=\liminf_k|\DIFF^2_1 f_k|(\Omega)\ge |\DIFF^2_1 f|(\Omega)>|\DIFF\nabla f|(\Omega),
\end{align*}
which is absurd. This also gives the same conclusion for $|\DIFF_p^2\,\cdot\,|$, in the case $p\in (1,+\infty]$.\fr
\end{rem}

We conjecture that the result of Theorem \ref{mainthm} can be extended to arbitrary dimensions. 
\begin{cnj}\label{maincnj}
The density result of Theorem \ref{mainthm} remains valid when the input domain is chosen to be any $n$-dimensional hypercube, $\Omega=(0,1)^n$.\footnote{After that our result has been announced and the completion of this paper, Sergio Conti in Bonn announced the proof
of the conjecture}
\end{cnj}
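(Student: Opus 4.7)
I would follow the same architecture as Theorem~\ref{mainthm}: extend $f$ past $\partial\Omega$, mollify to smooth, and build a CPWL approximant piecewise, but replace the 2D triangulation step with a ridge-function construction adapted to the spectral decomposition of the local Hessian. First, iterate an $n$-dimensional variant of Lemma~\ref{extlemma} across the $2n$ faces of $\partial\Omega$ to obtain $\tilde f$ on a neighbourhood $\tilde\Omega\supset\overline\Omega$ with $|\DIFF^2\tilde f|(\partial\Omega)=0$ and $|\DIFF^2\tilde f|(\tilde\Omega)\le C|\DIFF^2 f|(\Omega)$, and convolve with Friedrich mollifiers to get $f_\epsilon\in C^\infty(\tilde\Omega)$; Lemma~\ref{mollif} and Proposition~\ref{myersserrin} yield $|\DIFF^2_1 f_\epsilon|(\Omega)\to|\DIFF^2_1 f|(\Omega)$. (In dimension $n\ge 3$ Proposition~\ref{sobo} only gives $f\in L^{n/(n-2)}_\rmloc$, so the $L^\infty$ topology in the conjecture either should be weakened or the hypothesis strengthened to $f\in L^\infty(\Omega)$; a standard truncation of $f$ combined with the mollification then preserves $L^\infty$-convergence.)

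\textbf{Cubewise ridge construction.} Partition $\Omega$ into cubes $Q_j$ of side $h$ and approximate $f_\epsilon$ on $Q_j$ by its second-order Taylor polynomial $T_j$ at the centre $c_j$, with local Hessian $H_j=\nabla^2 f_\epsilon(c_j)$. Diagonalise $H_j=\sum_i \lambda_i^{(j)} v_i^{(j)}\otimes v_i^{(j)}$, so $|H_j|_1=\sum_i |\lambda_i^{(j)}|$, and split the quadratic part of $T_j$ into ridge quadratics $\tfrac{\lambda_i^{(j)}}{2}(v_i^{(j)}\cdot(x-c_j))^2$. Approximate each ridge quadratic by a ridge CPWL $h_i^{(j)}(v_i^{(j)}\cdot x)$ whose 1D second derivative has total variation close to $|\lambda_i^{(j)}|$ times the extent of $Q_j$ in the direction $v_i^{(j)}$. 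By Remark~\ref{cpwlrem}, the lifted ridge function has rank-one distributional Hessian concentrated on hyperplanes normal to $v_i^{(j)}$, with HTV contribution on $Q_j$ precisely $|\lambda_i^{(j)}||Q_j|$; summed over $i$ and $j$, these cube totals converge to $\int_\Omega|\nabla^2 f_\epsilon|_1\,d\LL^n$.

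\textbf{Gluing and main obstacle.} The substantive difficulty—and the point where $n\ge 3$ truly departs from $n=2$—is joining the cube-local ridge CPWLs across the shared faces $\Sigma_{j,j'}=\overline{Q_j}\cap\overline{Q_{j'}}$. Because the eigenframes $\{v_i^{(j)}\}$ vary with $j$, the local approximants disagree on $\Sigma_{j,j'}$ by $O(h)$ in $C^1$, and crucially the gradient mismatch is typically \emph{not} normal to the face; since continuity of a CPWL across a face forces the tangential component of any gradient jump to vanish, a naive concatenation is not even continuous, and an additive fix on a single face charges its full jump magnitude against $|\DIFF^2_1|$. My plan is to insert thin transition slabs of width $\delta\ll h$ adjacent to each face, inside which a Kuhn-type simplicial refinement carries the approximant affinely from the $\{v^{(j)}\}$-frame to the $\{v^{(j')}\}$-frame along a minimising geodesic in $SO(n)$; aligning the interior face normals of the refinement with the directions traversed by the interpolation should bound the per-slab HTV by $O(h\cdot h^{n-1})$ and hence the aggregate over all $O(h^{-1})$ face-area by $o(1)$.

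\textbf{Principal difficulty.} Executing this transition construction rigorously is the core challenge. Abstractly it is a polyhedral optimal-transport problem on the space of symmetric matrices under the Schatten-$1$ metric: in dimension two the eigenframe is parametrised by a single angle and interpolation between adjacent cubes is elementary, but for $n\ge 3$ the orbit space is $n(n+1)/2$-dimensional and one needs a dimension-free combinatorial lemma expressing any symmetric matrix as a weighted sum of rank-one matrices with geometrically compatible, globally consistent directions. This is the step I expect to be most delicate, and is presumably where the announced work of Conti mentioned in the footnote intervenes; once the slab lemma is in hand, the $\liminf$ half of the energy convergence is automatic from the lower semicontinuity remarked after Definition~\ref{defHSTV}, and passing $h\to 0$ after $\epsilon\to 0$ closes the argument.
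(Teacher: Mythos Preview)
The statement you are addressing is Conjecture~\ref{maincnj}, which the paper does \emph{not} prove; it is left open, with only a footnote noting that a proof was later announced by Conti. There is therefore no paper proof to compare your attempt against. What the paper does prove is the two-dimensional case (Theorem~\ref{mainthm}), and its method---interpolating $f$ at the vertices of a tilted triangulation aligned with local Hessian eigendirections, with self-similar transition triangles near cube boundaries---is structurally different from your ridge-function scheme: the paper never writes the approximant as a sum of one-dimensional ridges, and its boundary matching relies on the interpolant agreeing with $f$ at grid vertices, not on Taylor polynomials.

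Your plan has a concrete gap in the transition-slab accounting. You assert a per-slab HTV of order $O(h\cdot h^{n-1})$ from an $O(h)$ gradient mismatch, and then claim the aggregate is $o(1)$. But there are $O(h^{-n})$ faces, so the total is $O(h^{-n})\cdot O(h^n)=O(1)$, not $o(1)$; your phrase ``aggregate over all $O(h^{-1})$ face-area'' conflates total face area with face count. To make the slab contribution vanish you need the gradient mismatch across faces to be $O(h^2)$, which it is for the second-order Taylor polynomials themselves (since $\nabla T_j-\nabla T_{j'}=O(\|\nabla^3 f\|_\infty h^2)$), but your ridge-CPWL replacement of each quadratic introduces an additional $C^1$ error of order the ridge spacing, and unless that spacing is taken $\ll h$ the mismatch stays at $O(h)$ and the slab budget does not close. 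Even after fixing the arithmetic, the actual construction of the transition---producing a genuine CPWL function inside the slab that matches two incompatible polytope traces on its two faces with controlled HTV---is only sketched via an $SO(n)$-geodesic heuristic that does not obviously correspond to any CPWL object; this is, as you acknowledge, the heart of the matter, and your proposal does not resolve it. Finally, your observation about the $L^\infty$ topology is correct: for $n\ge 3$ Proposition~\ref{sobo} does not give continuity, so the density statement in the conjecture presumably has to be read in a weaker topology (e.g.\ $L^1_{\rmloc}$), and this should be flagged explicitly rather than patched by truncation.
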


\subsection{Proof of  Theorem \ref{mainthm}}\label{proofmain}
This whole section is devoted to the proof of Theorem \ref{mainthm}.  Remarkably,  our proof is constructive and provides an effective algorithm to build such approximating sequence. 

\bigskip

Take $f\in L^1_\rmloc(\Omega)$ with  finite Hessian--Schatten variation. 
We remark again  that indeed $f\in L^\infty(\Omega)$. We notice that we can assume with no loss of generality that $f$ is the restriction to $\Omega$ of a $C_\rmc^\infty (\RR^2)$ function. This is due to Proposition~\ref{bellapprox} (and its proof), a cut off argument and 
and a diagonal argument. Still, we only have to bound Hessian--Schatten variations only on $\Omega$. 

We want to find a sequence $\{f_k\}_k\subseteq{\rm CPWL}(\Omega)$ such that $f_k\rightarrow f$ in $L^\infty(\Omega)$ and $\limsup_{k} |\DIFF^2_1 f_k|(\Omega)\le |\DIFF^2_1 f|(\Omega)$. This will suffice, by lower semicontinuity.
\medskip
\\ \textbf{Step 1.} Fix now $\epsilon>0$ arbitrarily. The proof will be concluded if we find $g\in {\rm CPWL}(\Omega)$ with $\Vert f-g\Vert_{L^\infty(\Omega)}\le \epsilon$ and $|\DIFF^2_1 g|(\Omega)\le|\DIFF^2_1 f|(\Omega)+C_f\epsilon $, where $C_f$ is a constant that depends only on $f$ 
(via its derivatives, even of second and third order)  that still has to be determined. In what follows we will allow $C_f$ to vary from line to line.
\medskip
\\ \textbf{Step 2.} We add a bit of notation.
Let $v,w\in S^1$ with $v\perp w$, $s\in\RR^2$ and $h\in(0,\infty)$. We call $G(v,w,s,h)$ the grid of $\RR^2$
 $$G(v,w,s,h)\defeq \left\{s+t v+ z h w: t\in\RR,z\in\mathbb Z\right\}\cup \left\{s+z h v+ t w: t\in\RR,z\in\mathbb Z\right\}.$$
The grid consist in boundaries of squares (open or closed) that are called squares of the grid. Vertices of squares of the grid are called vertices of the grid and the same for edges. Notice that $G(v,w,s,h)$ contains a square with vertex $s$ and whose squares have sides of length $h$ and are parallel either to $v$ or to $w$.
\medskip
\\ \textbf{Step 3.} For $N\in\mathbb N$, we consider the grid $$G^N\defeq G((1,0),(0,1),0,2^{-N})$$ and we let $Q_k^N$ denote the closed squares of this grid that are contained in $\bar\Omega$. Here $k=1,\dots,2^{2 N}$.
\medskip
\\ \textbf{Step 4.} For every $N$ we find two collections of matrices $\{D_k^N\}_k$ and $\{U^N_k\}_k$  satisfying the following properties, for every $k$:
\begin{enumerate}
	\item \label{diag}$D_k^N$ is diagonal.
	\item\label{form} $U^N_k\in O(\mathbb Q^2)$ is a rotation matrix of angle $\theta_k\in (0,\pi/2)$, $\theta_k\ne \{\pi/4\}$.
	\item\label{apprx} It holds that
	$$
	\lim_{N\rightarrow \infty}\sup_k \sup_{x\in Q^N_k}| (U_k^N)^t\nabla^2 f(x) U_k^N -D_k^N|_1\rightarrow 0.
	$$
\end{enumerate}
To build such sequences, first build $\{ D_k^N\}_k$ and $\{\tilde U^N_k\}_k$ with $ D_k^N$ diagonal and $\tilde U^N_k\in O(\RR^2)$ such that 
\begin{equation}\label{nodscd}
	(\tilde U^N_k)^t\nabla^2 f(x_k^N)\tilde U^N_k= D_k^N,
\end{equation} where $x_k^N$ is the centre of the square $Q^N_k$. We can do this thanks to the symmetry of Hessians of smooth functions. 

We denote $R_\theta$ the rotation matrix of angle $\theta$.
We set $\hat U^N_k\defeq \tilde U_k^N A_k$, where  $A_k$  is a matrix of the type \begin{equation*}
\begin{pmatrix}
0&\pm 1\\
\pm 1&0
	\end{pmatrix}\qquad\text{or}\qquad
\begin{pmatrix}
	\pm 1&0\\
	0&\pm 1
\end{pmatrix}
\end{equation*}
defined in such a way that  $\hat U^N_k=R_{\hat\theta_k}$, for  some  $\hat\theta_k\in [0,\pi/2)$. Notice that \eqref{nodscd} still holds for $\hat U^N_k$ in place of $\tilde U^N_k$.

Now notice that points with rational coordinates are dense in $S^1\subseteq\RR^2$, as a consequence of the well known fact that the inverse of the stereographic projection maps $\mathbb Q$ into $\mathbb Q^2$. 
Therefore we can find $\theta_k\in (0,\pi/2)$, $\theta_k\ne\pi/4$ so close to  $\hat\theta_k$ so that $|R_{\theta_k}-R_{\hat \theta_k}|_1\le N^{-1}$ and such that $R_{\theta_k}\in\mathbb{ Q}^{2\times 2}$.
 Then, set $U_k^N\defeq R_{\theta_k}$. Items \eqref{diag} and \eqref{form} hold by the construction above, whereas item \eqref{apprx} can be proved taking into account also the smoothness of $f$.
 
 We write
 \begin{equation*}
 	U^N_k=
 	\begin{pmatrix}
 		|&|\\
 		v_k^N&w_k^N\\
 		|&|
 	\end{pmatrix}.
 \end{equation*}
Notice that  $v_k^N\perp w_k^N$ and $v_k^N,w_k^N\in S^1$. Also, $\theta_k$ is the angle formed by the $x$-axis with $v_k^N$ so that $\tan(\theta_k)=\sfrac{(v_k^N)_2}{(v_k^N)_1}\in\mathbb Q$ by \eqref{form}.
\medskip
\\ \textbf{Step 5.} 
By item \eqref{apprx} of \textbf{Step 4}, we take $N$ big enough so that \begin{equation}\label{csnjodsa}
	\sup_k \sup_{x\in Q^N_k}| (U_k^N)^t\nabla^2 f(x) U_k^N -D_k^N|\le \epsilon. 
\end{equation} We suppress the dependence on $N$ in what follows as from now $N$ will be fixed. Also, we can, and will, assume $2^{-N}\le\epsilon$.
\medskip
\\ \textbf{Step 6.} We consider grids on $Q_k$, for every $k$ and depending on $K\in\mathbb N$, free parameter. We recall that $Q_k$ has been defined in \textbf{Step 3}.
These grids will be called $$G^K_k\defeq G(v_k,w_k,s_k,h_k^K),$$ where $h_k^K$ will be determined in this step and $s_k$ is any of the vertices of $Q_k$ (the choice of the vertex will not affect the grid).

For every $k$, we write $$\mathbb Q\ni\tan(\theta_k)=\frac{q_k}{p_k},$$
where ${\rm MCD} (p_k,q_k)=1$. 
We define also $$h_k^K\defeq \frac{1}{2^N} \frac{1}{2^K}\frac{1}{ \sqrt{p_k^2+q_k^2}\prod_{h\ne k}q_h}.$$
Notice that
$$
U_k=\frac{1}{\sqrt{p_k^2+q_k^2}}
\begin{pmatrix}
	p_k&-q_k\\
	q_k&p_k
\end{pmatrix},
$$
and, as $U_k$ is an orthogonal matrix, we have that 
$$
{\sqrt{p_k^2+q_k^2}}U_k^{-1}=
\begin{pmatrix}
	p_k&q_k\\
	-q_k&p_k
\end{pmatrix}\in\mathbb Z^{2\times 2}.
$$
This ensures that the vertices of $Q_k$ are also vertices of $G_k^K$. Now notice that lines in $G_k^K$ parallel to $v_k$ intersect the horizontal edges of $Q_k$ in points spaced $\sfrac{h_k^K}{\sin(\theta_k)}$ and also lines in $G_k^K$ parallel to $w_k$ intersect the vertical edges of $Q_k$ in points spaced $\sfrac{h_k^K}{\sin(\theta_k)}$.
We now compute
$$
\frac{h_k^K}{\sin(\theta_k)}={h_k^K}{\sqrt{1+\cot^2(\theta_k)}}=\frac{1}{2^N 2^K \sqrt{p_k^2+q_k^2}\prod_{h\ne k}q_h}\sqrt{1+\frac{p_k^2}{q_k^2}}=\frac{1}{2^N}\frac{1}{ 2^K}\frac{1} {\prod_h q_h}
$$
and we notice that this quantity depends only on $K$ (and on $N$) but not on $k$.
\medskip
\\ \textbf{Step 7.} Now we want to build a triangulation for the square $Q_k$, such triangulation will depend on the free parameter $K$ and will be called $\Gamma_k^K$. We will then glue all the triangulations $\{\Gamma_k^K\}_k$ to obtain $\Gamma^K$, a triangulation for $\bar\Omega$. We call edges and vertices of triangulation the edges and vertices of its triangles.  We refer to Figure~\ref{fig} for an illustration of the proposed triangulation. 
\begin{figure}[t]
\begin{minipage}{1.0\linewidth}
  \centering
  \centerline{\includegraphics[width= \linewidth]{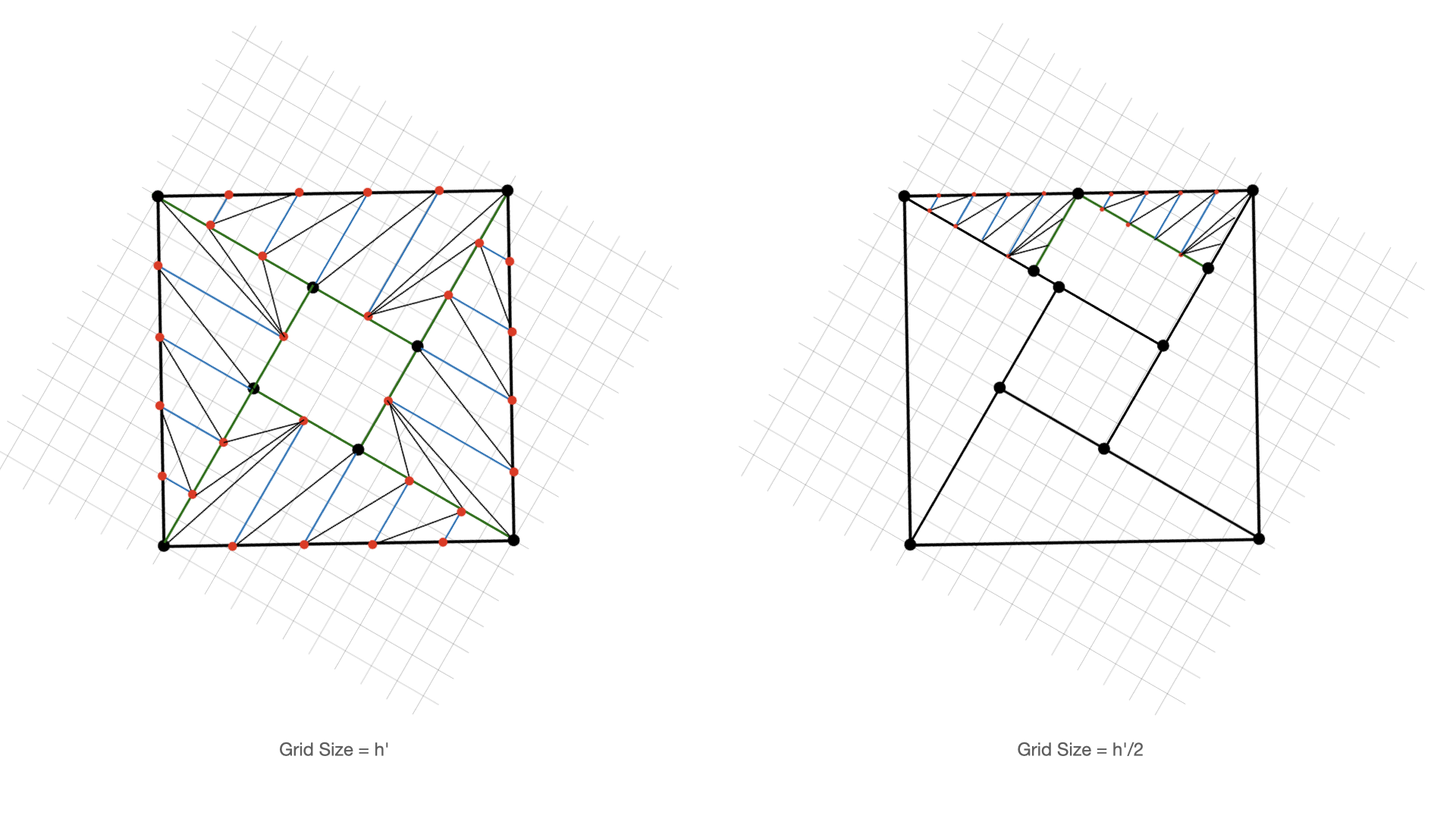}}
  \caption{  An illustration of the proposed triangulation in the square $Q_k$.}
  \label{fig} \medskip
\end{minipage}
\end{figure}

Fix for the moment $k$. We take the grid $G_k^0$. By symmetry, we can reduce ourselves to the case of  $\theta_k\in (\pi/4,\pi/2)$. Indeed, if $\theta_k\in (0,\pi/4)$, consider $\mathcal S$ to be the reflection against the axis passing through the top left  and bottom right vertex of $Q_k$, let $v_k'\defeq-\mathcal S v_k$ and $w_k'\defeq \mathcal S w_k$, build the triangulation $(\Gamma_k^K)'$ according to $v_k'$ and $w_k'$ and finally set $\Gamma_k^K\defeq\mathcal S(\Gamma_k^K)'$. 

Our building block for the triangulation is the triangle $\mathcal T^0_u$. We denote $A,B,C,D$ the vertices of the square $Q_k$, with $A$ corresponding to the top left vertex and the other named clockwise. Let $M,N,O,P$ denote the midpoints of $\overline{AB},\overline{BC},\overline{CD},\overline{DA}$ respectively. Then $\mathcal T^0_u=ABE$ is the right triangle with hypotenuse $\overline{AB}$ and such that its angle in $A$ is  $\pi/2-\theta_k$ and such that $E$ lies inside $Q_k$. We notice that $E$ is a vertex of $G_k^0$ by what proved in \textbf{Step 6}. Now we consider the intersections of lines of $G_k^0$ parallel to $v_k$ with the hypotenuse of $\mathcal T_u^0$ (these are not, in general, vertices of $G_k^0$) and the intersections of vertices of $G_k^0$ with the short sides of $\mathcal T_u^0$. 
Then we triangulate $\mathcal T^0_u$ in such a way that the vertices of the triangulation on the sides $\mathcal T^0_u$ are exactly at the points just considered.  Any finite triangulation is possible, but it has to be fixed. Now we rotate a copy of $\mathcal T^0_u$ (together with its triangulation) clockwise by $\pi/2$ and we translate it so that the point corresponding to $A$ moves to $B$. We thus obtain a triangulated triangle  $\mathcal T^0_r=BCF$. By construction, the triangulation on  $\mathcal T^0_r$ has the following property: its vertices on the hypotenuse of  $\mathcal T^0_u$ correspond to the intersection points of  lines of $G_k^0$ parallel to $w_k$ with the hypotenuse and its vertices on the short sides are exactly the vertices of $G_k^0$ on the short sides. Then we continue in this fashion to obtain four triangulated triangles, $\mathcal T^0_u,\mathcal T^0_r,\mathcal T^0_d,\mathcal T^0_l$,  as in the left side of Figure~\ref{fig}.  Notice that $\mathcal T^0_u\cup\mathcal T^0_r\cup\mathcal T^0_d\cup\mathcal T^0_l$, together with its triangulation is invariant by rotations of $\pi/2$ with centre the centre of $Q_k$.
Notice also that $Q_k\setminus \big(\mathcal T^0_u\cup\mathcal T^0_r\cup\mathcal T^0_d\cup\mathcal T^0_l\big)$ is formed by 
 a square which is itself a  union of squares, each with sides parallel to $v_k$ or $w_k$ and of length $h_k^0$.
 We triangulate $Q_k\setminus \big(\mathcal T^0_u\cup\mathcal T^0_r\cup\mathcal T^0_d\cup\mathcal T^0_l\big)$ in the standard way, where by standard way we mean the triangulation obtained considering the grid $G^K_k$ (now $K=0$) and, for every square of the grid, the diagonal with direction $(v_k-w_k)/\sqrt 2$. This is step $0$ and this triangulation will be called $\Gamma_k^0$. 

We show now how to build the triangulation at step $K+1$, $\Gamma_k^{K+1}$ starting from the one at step $K$, $\Gamma_k^K$, 
 see the right side of Figure~\ref{fig}.  At step $K$ we will have $\mathcal T^K_u,\mathcal T^K_r,\mathcal T^K_d,\mathcal T^K_l$. Now $\mathcal T^{K+1}_u$ will be union of two copies of 
$\mathcal T^{K}_u$ scaled by a factor $1/2$ but not rotated nor reflected, but translated so that the vertices corresponding to $A$ will correspond to $A$ and $M$ respectively. Also the triangulation of $\mathcal T^{K}_u$ is scaled and maintained. We do the same for $\mathcal T^{K}_r,\mathcal T^{K}_d,\mathcal T^{K}_l$, so that $\mathcal T^{K+1}_u\cup \mathcal T^{K+1}_r\cup \mathcal T^{K+1}_d\cup\mathcal  T^{K+1}_l$ together with its triangulation is invariant by rotations of $\pi/2$ with centre the centre of $Q_k$.\\
  We triangulate $Q_k\setminus \big(\mathcal T^{K+1}_u\cup\mathcal T^{K+1}_r\cup\mathcal T^{K+1}_d\cup\mathcal T^{K+1}_l\big)$ using the standard triangulation, with respect to $G_k^{K+1}$. We remark that $Q_k\setminus \big(\mathcal T^{K+1}_u\cup\mathcal T^{K+1}_r\cup\mathcal T^{K+1}_d\cup\mathcal T^{K+1}_l\big)$ is formed by union of squares, each with sides parallel to $v_k$ or $w_k$ and of length $h_k^{K+1}$.
  Notice that if $\sigma$ is a segment that is part of the boundary of one of $\mathcal T^{K+1}_u,\mathcal T^{K+1}_r,\mathcal T^{K+1}_d,\mathcal T^{K+1}_l$ and $\sigma$ is not contained in the boundary of $Q_k$, then the vertices of the triangulations on $\sigma$ coincide exactly with vertices of $G^{K+1}_k$ on $\sigma$, so that we have a well defined triangulation,  of $Q_k$ that we call $\Gamma_k^{K+1}$.
 
 Now we define $\Gamma^{K}$ as the triangulation of $\bar\Omega$ obtained by considering all the triangulations in $\{\Gamma_k^{K}\}_k$. Notice that, by \textbf{Step 6}, the triangulations in $\{\Gamma_k^{K}\}_k$ can be joined, as their vertices  on the boundaries of $\{Q_k\}_k$ match.
 Notice that for every $K$, $\mathcal T^{K}_u\cup\mathcal T^{K}_r\cup\mathcal T^{K}_d\cup\mathcal T^{K}_l$ is contained in a $2^{-N}2^{-K}$ neighbourhood of the lines of $G^N$, and this neighbourhood (in $\Omega$) has vanishing area as $K\rightarrow \infty$. Therefore, squares of the grid that are triangulated by $\Gamma^K$  in the standard way and such that also their eight neighbours are triangulated in the standard way by $\Gamma^K$ eventually cover monotonically $\Omega$, up to the axes of the grid $G^N$. Notice also that triangles in $\Gamma^K$ have edges of length smaller that $2^{-N} 2^{-K}$.
 
 We add here this crucial remark on which we will  heavily rely in the sequel and which well be the occasion to introduce the angle $\bar\theta$. There exists an angle, $\bar\theta>0$, such that every angle in the triangles of $\Gamma^K$ is bounded from below by $\bar\theta$, uniformly in $K$ ($\bar\theta$ depends on the choice of the various triangulations of $\mathcal T_u^K$, that, in turn, depend on $N$, so that $\bar\theta$ depends only on $N$ and $f$). This property is ensured by the self-similarity construction, that provides at each step $K$ two families of triangles, those arising from self-similarity and those arising from the bisection of
a (tilted) square with sides parallel to those of $Q_k$, as in Figure~\ref{fig}.
 \medskip
 \\ \textbf{Step 8.} For every $K$, we set $g^K$ as the $\rm CPWL$ interpolant  of $f$ according to $\Gamma^K$. Recall that $\rm CPWL$ functions on $\Omega$ have finite Hessian--Schatten total variation. We can compute $|\DIFF^2_1 g^K|=|\DIFF \nabla g^K|$ explicitly, that will be concentrated on jump points of the $\nabla g^K$, i.e.\ on the edges of the triangulation $\Gamma^K$ (Remark \ref{cpwlrem}).
 
 The computations of \textbf{Step 9} below ensure that $\{g^K\}_K$ are equi-Lipschitz functions, so that it is clear that as $K\rightarrow\infty$ it holds that $\Vert f-g^K\Vert_{L^\infty(\Omega)}\rightarrow 0$. We claim that $\limsup_{K\to\infty}|\DIFF^2_1 g^K|(\Omega)\le |\DIFF ^2 f|(\Omega)+C_f\epsilon$. Let $U^\delta$ denote the open $\delta$ neighbourhood of $G^N$, intersected with $\Omega$. 
 
 Recall the definition of $\bar\theta$ given at the end of \textbf{Step 7}. Some of our estimates depend
 on $\bar\theta$ (see,  in particular,  the first item below and \textbf{Step 9}) whose value essentially depends on $N$. Since $N$ has been fixed, depending on $\epsilon$
 and the modulus of continuity of $\nabla^2 f$, we may absorb the $\bar\theta$ dependence into the $f$ dependence.
 
 The claim, hence the conclusion, will be a consequence of these two following facts, stated for $T$ closed triangle in $\Gamma^K$, say $T\in Q_{\bar k}$:
 \begin{enumerate}
 	\item\label{it1}it holds  $$|\DIFF^2_1 g^K|(T\cap\Omega)\le C_f\LL^2(T);$$
 	\item\label{it2} whenever $T$ does \emph{not} intersect $U^{2\cdot2^{-N} 2^{-K}}$,
 	then $$\frac 12 |\DIFF^2_1 g^K|(T)\le  (|D_{\bar k}|_1+C_f\epsilon)\LL^2(T).$$
 \end{enumerate}
Notice that in the first item we have a constant $C_f$ which depends on $f$, and hence we have to take $K$ big enough so that the contributions of these terms are small enough. 

Recall that in our estimates we allow $C_f$ to vary line to line.

We defer the proof of items \ref{it1} and \ref{it2} to \textbf{Step 9} and \textbf{Step 10} respectively, now let us show how to conclude the proof using these facts. 
Fix for the moment $K$ and $k$.
Now consider $\{T_i\}_i$, the (finite) collection (depending on $K$ and $k$, but we will not make such dependence explicit) of all the  closed triangles in the triangulation $\Gamma^K$ that are contained in $\bar{Q}_k$. Notice that
\begin{enumerate}[label=\roman*)]
	\item The interiors of $\{T_i\}_i$ are pairwise disjoint.
	\item If $\sigma$ is an edge of $\Gamma^K$ that  lies on the boundary of $Q_k$, then there exist exactly one element of $\{T_i\}_i$ having $\sigma$ as edge. This is due to the fact that we are taking triangles contained in $\bar Q_k$
	\item If $\sigma$ is an edge of $\Gamma^K$ that does \emph{not} lie on the boundary of $Q_k$, then there exist exactly two elements of $\{T_i\}_i$ having $\sigma$ as edge. 
\end{enumerate}

We order the collection  $\{T_i\}_i$  in such a way that 
$T_1,\dots, T_{\bar{i}}$ are contained in $U^{4\cdot 2^{-N}2^{-K}}$ and $T_{\bar{i}+1},\dots, $ do not intersect  $U^{2\cdot 2^{-N}2^{-K}}$. We compute, using items \ref{it1} and \ref{it2}, recalling iii) above for what concerns the factor ${1}/{2}$ in the first line, 
\begin{align*}
|\DIFF^2_1 g^K|(\Omega\cap \bar{Q}_k)&\le  \sum_{i\le \bar i} |\DIFF^2_1 g^K|(T_i)+\frac{1}{2}\sum_{i> \bar i} |\DIFF^2_1 g^K|(T_i)
\\&\le \sum_{i\le \bar i} C_{f}\mathcal{L}^2(T_i)+\sum_{i> \bar i}(|D_{ k}|_1+C_f\epsilon) \mathcal{L}^2(T_i)
\\&\le C_{f}\mathcal{L}^2(U^{4\cdot 2^{-N}2^{-K}}\cap Q_{k})+(|D_{ k}|_1+C_f\epsilon) \mathcal{L}^2(Q_k).
\end{align*}
Therefore, repeating the procedure for every $k$,
\begin{align*}
|\DIFF^2_1 g^K|(\Omega)&\le\sum_k |\DIFF^2_1 g^K|(\Omega\cap \bar{Q}_k)\\&\le 
\sum_kC_{f}\mathcal{L}^2(U^{4\cdot 2^{-N}2^{-K}}\cap Q_k)+C_f\epsilon \sum_k  \mathcal{L}^2(Q_k)+ \sum_k |D_k|_1 \mathcal{L}^2(Q_k)\\&\le
C_{f}\mathcal{L}^2(U^{4\cdot 2^{-N}2^{-K}})+ C_f\epsilon \mathcal{L}^2(\Omega)+\sum_k  |D_k|_1 \mathcal{L}^2(Q_k).
\end{align*}
Fix now $K$ big enough so that $C_{f}\mathcal{L}^2(U^{4\cdot 2^{-N}2^{-K}})\le \epsilon$, we have
$$|\DIFF^2_1 g^K|(\Omega)\le C_f\epsilon+\sum_k  |D_k|_1 \mathcal{L}^2(Q_k). $$
Now we compute, for every $k$, taking into account $\eqref{csnjodsa}$,
\begin{align*}
	|D_k|_1\mathcal{L}^2(Q_k)&=
	\int_{Q_k} |D_k|_1\le \int_{Q_k} \big(| (U_k^N)^t\nabla^2 f(x) U_k^N|_1+\epsilon\big)= \int_{Q_k} \big(| \nabla^2 f(x) |_1+\epsilon\big)\\&=|\DIFF^2_1 f|(Q_k)+\epsilon\mathcal{L}^2(Q_k)
\end{align*}
so that we can continue our previous computation to see that 
\begin{align*}
	|\DIFF^2_1 g^K|(\Omega)&\le C_f\epsilon+\sum_k  |D_k|_1 \mathcal{L}^2(Q_k)\le C_f\epsilon +\sum_k |\DIFF ^2 f|(Q_k)+\sum_k\epsilon \mathcal{L}^2(Q_k) \\&=C_f\epsilon +|\DIFF^2_1 f|(\Omega)+\epsilon\mathcal{L}^2(\Omega)\le C_f\epsilon +|\DIFF^2_1 f|(\Omega)
\end{align*}
thus concluding the proof.
 \medskip
\\ \textbf{Step 9.} We prove item \ref{it1} of \textbf{Step 8}. For definiteness, assume that $K$ is fixed. We will heavily use Remark \ref{cpwlrem} with no reference.

Say $T=ABC\subseteq Q_k$. It is enough to show that 
$|\DIFF^2_1 g^K|(\overline{AB})\le C_f\mathcal{L}^2(T)$, under the assumption that $\overline{AB}$ does not lie in the boundary of $\Omega$, so that
there exists another triangle $T'=ABC'$ of $\Gamma^K$ (possibly inside
an adjacent cube to $Q_k$, recall also that the mesh size parameter $K$ is independent of $k$), so that $T$ and $T'$ have disjoint interiors. 

Call $a=\nabla g^K$ on $T$ and $a'=\nabla g^K$ on $T'$.
Then,
\begin{alignat*}{2}
        &\begin{cases}
        a\,\cdot\,(B-C)=f(B)-f(C)\\
        a\,\cdot\,(A-C)=f(A)-f(C)
    \end{cases}
&&\qquad\text{and}\qquad
        \begin{cases}
        a'\,\cdot\,(B-C')=f(B)-f(C')\\
        a'\,\cdot\,(A-C')=f(A)-f(C').
    \end{cases}
\end{alignat*}
The mean value theorem gives 
\begin{alignat}{2}\label{asdcs}
    &\begin{pmatrix}
	(B-C)^t\\
	(A-C)^t
    \end{pmatrix}
    a&&=
    \begin{pmatrix}
	\nabla f(C)(B-C)+\frac{1}{2}(B-C)^t\nabla^2 f(\xi_1)(B-C)\\
	\nabla f(C)(A-C)+\frac{1}{2}(A-C)^t\nabla^2 f(\xi_2)(A-C)
    \end{pmatrix},
\end{alignat}
where $\xi_1,\xi_2\in T$.
Now notice that as the angles of $ABC$ are bounded below by $\bar\theta$, the matrix $$ \begin{pmatrix}
	(B-C)^t\\
	(A-C)^t
    \end{pmatrix}$$ is invertible and its inverse has norm bounded above by 
    $\frac{c_{\bar \theta}}{|\overline{AB}|}$, for a suitable constant $c_{\bar\theta}$. Also, possibly choosing a larger constant $c_{\bar\theta}$, the bound from below of the angles yields that $|\overline{BC}|\le c_{\bar\theta}|\overline{AB}|$ and $|\overline{AC}|\le c_{\bar\theta}|\overline{AB}|$. Similar considerations hold for the triangle $T'$. As $c_{\bar\theta}$ depends only on $\bar\theta$, we will absorb this dependence into the $f$ dependence, as announced above.
    
    We rewrite then \eqref{asdcs} as
 \begin{alignat*}{2}
 a=\nabla f(C)+\frac{1}{2}
 \begin{pmatrix}
	(B-C)^t\\
	(A-C)^t
    \end{pmatrix}^{-1}
    \begin{pmatrix}
	(B-C)^t\nabla^2 f(\xi_1)(B-C)\\
	(A-C)^t\nabla^2 f(\xi_2)(A-C)
    \end{pmatrix}.
\end{alignat*}   
Similarly,
 \begin{alignat*}{2}
 a'=\nabla f(C')+\frac{1}{2}
 \begin{pmatrix}
	(B-C')^t\\
	(A-C')^t
    \end{pmatrix}^{-1}
    \begin{pmatrix}
	(B-C')^t\nabla^2 f(\eta_1)(B-C')\\
	(A-C')^t\nabla^2 f(\eta_2)(A-C')
    \end{pmatrix},
\end{alignat*}   
for $\eta_1,\eta_2\in T'$.
Hence
\begin{align*}
    |\DIFF^2_1 g^K|(\overline{AB})=|a-a'||\overline{ AB}|\le\bigg( |\nabla f(C)-\nabla f(C')|+\frac{C_{f}}{|\overline{AB}|}|\overline{AB}|^2\bigg)|\overline{AB}|.
\end{align*}
Now, $|\nabla f(C)-\nabla f(C')|\le\max|\nabla f| (|\overline{AC}|+|\overline{AC'}|)$ so that
$$
|\DIFF^2_1 g^K|(\overline{AB})\le C_{f}|\overline{AB}|^2
$$
and the right hand side is bounded above by $C_{f}\mathcal{L}^2(T)$ as the angles of $T$ are bounded below by $\bar\theta$.
\medskip
\\ \textbf{Step 10.} We prove item \ref{it2} of \textbf{Step 8}.  For definiteness, assume that $K$ and $k$ are fixed, for $T\subseteq Q_k$. We will heavily use Remark \ref{cpwlrem} with no reference again. Notice that $T$ lies in a closed square of $G_k^K$ and this square, together with the other squares of $G_k^K$ intersecting it (at the boundary), is triangulated in the standard way, by the assumption that $T$ does not intersect $U^{2\cdot 2^{-N}2^{-K}}$. Notice that the square mentioned before is divided by $\Gamma^K$ into two triangles. For definiteness, assume that $T$ is the one whose barycentre  has smaller $y$ coordinate, the other case being similar.
Also, for definiteness, assume that $\theta_k \in(\pi/4,\pi/2) $, the case $\theta_k\in(0,\pi/4) $ being similar. 

Call $T=ACD$, such that the angles are named clockwise and the angle at $D$ is of $\pi/2$. Call $B$ the other vertex of the square of the grid in which $T$ lies. Call $E$ the vertex of $\Gamma^K$ such that $C=(B+E)/{2}$. Call $a=\nabla g^K$ on $T$, $a'=\nabla g^K$ on $ACB$ and $a''=\nabla g^K$ on $CDE$. Finally, call $F\defeq (B+D)/{2}$ and $\ell=|\overline{AD}|$.   We refer to Figure \ref{Fig:Illust} for an illustration on the introduced notations. 
\begin{figure}[t]
\begin{minipage}{1.0\linewidth}
  \centering
  \centerline{\includegraphics[width= 0.7\linewidth]{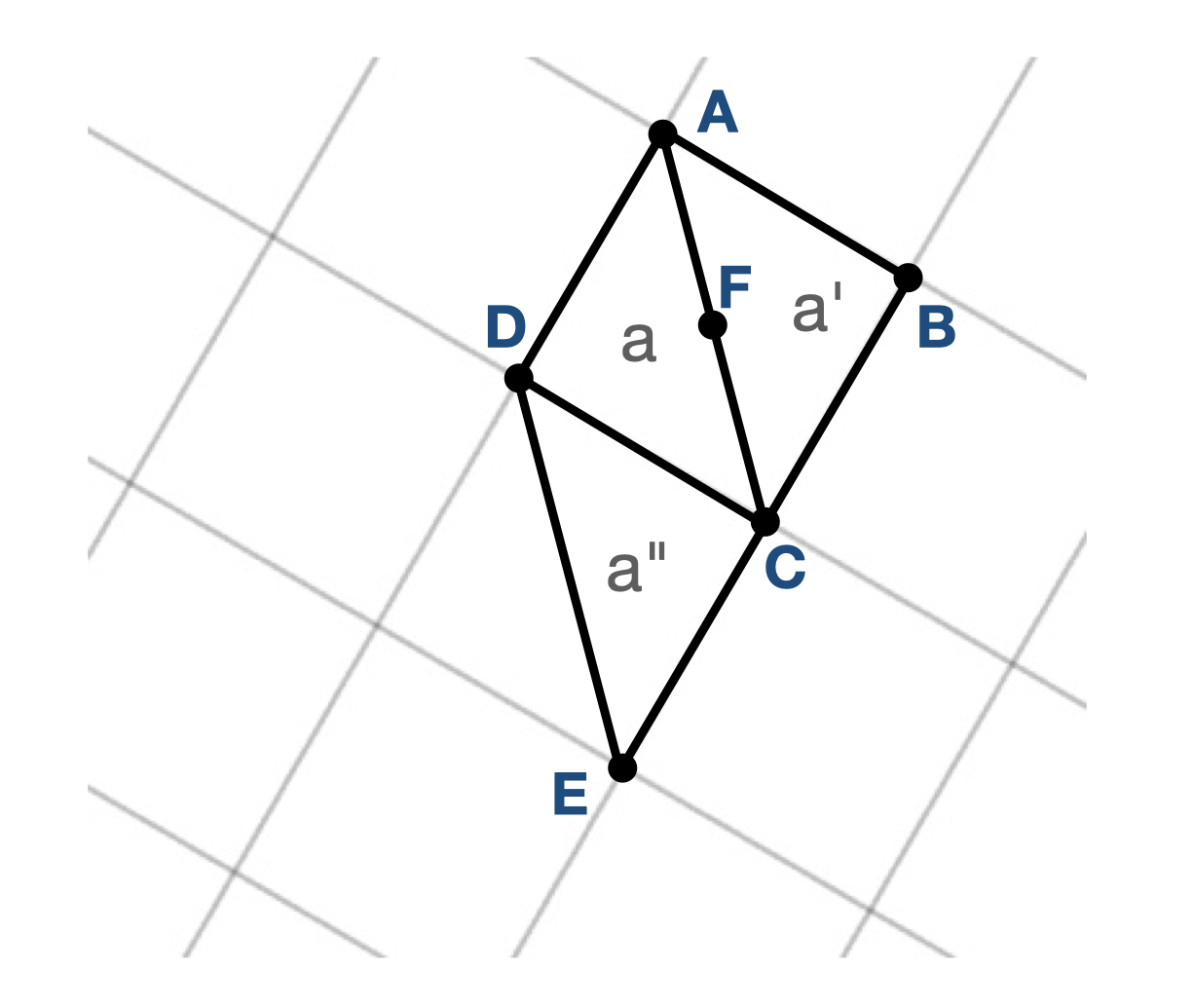}}
  \caption{  An illustration of the notations introduced in the Step 10 of the proof.}
  \label{Fig:Illust} \medskip
\end{minipage}
\end{figure}

We first estimate $|\DIFF^2_1 g^K|(\overline{AC})$: 
\begin{align*}
|\DIFF^2_1 g^K|(\overline{AC})= |a-a'|\HH^1(\overline{AC})=\sqrt{2}\ell  |a-a'|.
\end{align*}
Now we compute
\begin{align*}
	(g^K(D)-g^K(F))-(g^K(F)-g^K(B))&=f(D)+f(B)-f(A)-f(C)\\&=(f(D)-f(A))-(f(C)-f(B)).
\end{align*}
Now, 
\begin{align*}
\big|f(D)-f(A)-\big(-\ell\partial_{w_k} f(A)+\frac{\ell^2}{2}\partial^2_{w_k,w_k}f(A)\big)\big|&\le\frac{\ell^3}{6} \Vert \partial_{w_k,w_k,w_k}^3 f\Vert_\infty,\\
\big|f(C)-f(B)-\big(-\ell\partial_{w_k} f(B)+\frac{\ell^2}{2}\partial^2_{w_k,w_k}f(B)\big)\big|&\le\frac{\ell^3}{6} \Vert \partial_{w_k,w_k,w_k}^3 f\Vert_\infty.
\end{align*}
and 
\begin{align*}
	|\partial^2_{w_k,w_k}f(B)-\partial^2_{w_k,w_k}f(A)|&\le \ell \Vert \partial_{v_k,w_k,w_k}^3 f\Vert_\infty,\\
|\partial_{w_k}f(B)-\partial_{w_k}f(A)-\ell\partial^2_{v_k,w_k}f(A)|&\le \frac{\ell^2}{2} \Vert\partial_{v_k,v_k,w_k}^3 f\Vert_\infty.
\end{align*}

Then we can compute
\begin{align*}
|a-a'|=	\frac{\sqrt{2}}{\ell}\big|(g^K(D)-g^K(F))-(g^K(F)-g^K(B))\big|\le \frac{\sqrt{2}}{\ell}\big(\ell^2|\partial^2_{v_k,w_k}f(A)|+C_f\ell^3\big).
\end{align*}
All in all, recalling $2^{-N}\le\epsilon$, $$|\DIFF^2_1 g^K|(\overline{AC})\le 2 \ell^2\big(|\partial^2_{v_k,w_k}f(A)|+C_f \epsilon\big).$$
Now
$$ \partial^2_{v_k,w_k}f(A)=w_k^t\nabla^2 f(A)v_k=(0,1)^t U_k^t\nabla^2 f(A) U_k(1,0)$$
so that, by \eqref{csnjodsa}, 
$$
|\partial^2_{v_k,w_k}f(A)|\le |(0,1)^tD_k (1,0)|+ |(0,1)^t (U_k^t\nabla^2 f(A) U_k-D_k)(1,0)|\le C_f\epsilon
$$
and this gives
$$|\DIFF^2_1 g^K|(\overline{AC})\le\ell^2 C_f\epsilon.$$

We turn to $|\DIFF^2_1 g^k|(\overline{CD})$:
$$
|\DIFF^2_1 g^K|(\overline{CD})=|a-a''|\HH^1(\overline{CD})=\ell |a-a''|.
$$
Now we compute
\begin{align*}
(g^K(E)-g^K(C))-(g^K(D)-g^K(A))=(f(E)-f(C))+(f(A)-f(D)).
\end{align*}
Now
\begin{align*}
	\big|f(E)-f(C)-\big(-\ell\partial_{w_k} f(C)+\frac{\ell^2}{2}\partial^2_{w_k,w_k}f(C)\big)\big|&\le\frac{\ell^3}{6} \Vert \partial_{w_k,w_k,w_k}^3 f\Vert_\infty,\\
	\big|f(A)-f(D)-\big(\ell\partial_{w_k} f(D)+\frac{\ell^2}{2}\partial^2_{w_k,w_k}f(D)\big)\big|&\le\frac{\ell^3}{6} \Vert \partial_{w_k,w_k,w_k}^3 f\Vert_\infty.
\end{align*}
and 
\begin{align*}
	|\partial_{w_k} f(C)-\partial_{w_k} f(D)-\ell\partial^2_{v_k,w_k}f(D)|\le \frac{\ell^2}{2}\Vert \partial^3_{v_k,v_k,w_k}f\Vert_\infty.
\end{align*}

Then we can compute
\begin{align*}
	|a-a''|&=	\frac{1}{\ell}\big|(g^K(E)-g^K(C))-(g^K(D)-g^K(A))\big|\\&\le\frac{1}{\ell} \big( \frac{\ell^2}{2}|\partial^2_{w_k,w_k}f(C)|+\frac{\ell^2}{2}|\partial^2_{w_k,w_k}f(D)|+\ell^2|\partial^2_{v_k,w_k}f(D)|+C_f\ell^3\big).
\end{align*}
All in all, recalling again $2^{-N}\le \epsilon$,
$$|\DIFF^2_1 g^K|(\overline{CD})\le\ell^2(\frac{1}{2}|\partial^2_{w_k,w_k}f(C)|+\frac{1}{2}|\partial^2_{w_k,w_k}f(D)|+|\partial^2_{v_k,w_k}f(D)|+C_f\epsilon).$$
As before, $|\partial^2_{v_k,w_k}f(D)|\le C_f\epsilon$. Also, with similar computations as before,
$$
|\partial^2_{w_k,w_k}f(C)|\le |(0,1)^tD_k (0,1)|+ |(0,1)^t (U_k^t\nabla^2 f(D) U_k-D_k)(0,1)|\le |(0,1)^tD_k (0,1)|+ C_f\epsilon,
$$
and similarly 
$$
|\partial^2_{w_k,w_k}f(D)|\le |(0,1)^tD_k (0,1)|+ |(0,1)^t (U_k^t\nabla^2 f(D) U_k-D_k)(0,1)|\le |(0,1)^tD_k (0,1)|+ C_f\epsilon.
$$
Therefore,
$$|\DIFF^2_1 g^K|(\overline{CD})\le\ell^2\big( |(0,1)^tD_k (0,1)|+C_f\epsilon\big).$$
With similar computations we arrive at 
$$|\DIFF^2_1 g^K|(\overline{AD})\le\ell^2\big( |(1,0)^tD_k (1,0)|+C_f\epsilon\big).$$

Summing all the three contributions,
\begin{align*}
	|\DIFF^2_1 g^K|(T)&=|\DIFF^2_1 g^K|(\overline{AC})+|\DIFF^2_1 g^K|(\overline{CD})+|\DIFF^2_1 g^K|(\overline{AD})\\&\le\ell^2 C_f\epsilon
	+\ell^2\big( |(0,1)^tD_k (0,1)|+C_f\epsilon\big)+\ell^2\big( |(1,0)^tD_k (1,0)|+C_f\epsilon\big)
	\\&\le\ell^2 (C_f\epsilon +  |(0,1)^tD_k (0,1)|+ |(1,0)^tD_k (1,0)|)
	\\&=2\mathcal{L}^2(T) (C_f\epsilon +|D_k|_1)
\end{align*}
which concludes the proof.\null\nobreak\hfill\ensuremath{\square}

\section{Extremal Points of The Unit Ball}\label{sec:extreme}

Let $\Omega\defeq (0,1)^n\subseteq\RR^n$. 
In this section, we investigate the extremal points of the set
$$
\{f\in L^1_\rmloc(\Omega):|\DIFF^2 f|(\Omega)\le 1\}.
$$
Notice that elements of the set above are indeed in $L^1(\Omega)$, by Proposition \ref{sobo}, as cubes support Poincaré inequalities. In order to carry out our investigation, we will consider a suitable quotient space. We describe now our working setting.

We consider the Banach space $L^1(\Omega)$, endowed with the standard $L^1$ norm. We let $\mathcal A\subseteq L^1(\Omega)$ denote the (closed) subspace of affine functions. Therefore, ${L^1(\Omega)}/{\mathcal A}$,  endowed with the quotient norm, is still a Banach space. We call $\pi:L^1(\Omega)\rightarrow {L^1(\Omega)}/{\mathcal A}$ the canonical projection.
We define 
$$
\BB\defeq\left\{g\in \displaystyle {L^1(\Omega)}/{\mathcal A}: |\DIFF^2 g|(\Omega)\le 1\right\},
$$
where we notice that the $|\DIFF^2 \,\cdot\,|(\Omega)$ seminorm factorizes to the quotient, so that $\BB=\pi(\{f\in L^1(\Omega):|\DIFF^2 f |(\Omega)\le 1\})$. Also, by Proposition \ref{sobo} and standard functional analytic arguments (in particular, the Rellich--Kondrachov Theorem), we can prove that the convex set $\BB$ is compact. We will then be able to apply the Krein--Milman Theorem, for $\MM\subseteq\BB$: 
\begin{equation}\tag{KM}\label{KM}
   \BB={\rm \overline{co}(\MM)}\text{ if and only if }{\rm ext}(\BB)\subseteq\overline\MM. 
\end{equation}
We set 
$$\EE\defeq \pi({\rm CPWL}(\Omega))\cap {\rm ext}(\BB)\subseteq\SSS,$$
where
$$
\SSS\defeq\left\{g\in \displaystyle{L^1(\Omega)}/{\mathcal A}: |\DIFF^2 g|(\Omega)= 1\right\}.
$$
Thus, $\BB$ corresponds to the unit ball with respect to the $|\DIFF^2 \,\cdot\, |(\Omega)$ norm whereas $\mathcal{S}$ to the unit spere with respect to the same norm.

\bigskip

Even though we do not have an explicit characterization of extremal points of $\BB$, it is easy to establish whether a
function $g\in\pi(\rm CPWL(\Omega))$ is extremal or not.
\begin{prop}[CPWL Extreme Points]\label{fivedotfive}  A function
$g\in \pi(\rm{CPWL}(\Omega))\cap\SSS$ belongs to $\EE$ if and only if $h\in {\rm span}(g)$ for all 
$h\in\BB$ with $\supp(|\DIFF^2 h|)\subseteq\supp(|\DIFF^2 g|)$.
\end{prop}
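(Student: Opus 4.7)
The plan is to reduce the problem to one about CPWL functions on a fixed polytopal decomposition of $\Omega$, then close the two implications separately using the rigidity of the HTV measure on one side and an explicit one-parameter perturbation on the other. The starting observation is that any $h\in\BB$ with $\supp(|\DIFF^2 h|)\subseteq\supp(|\DIFF^2 g|)$ is automatically CPWL on the same decomposition as $g$: by Proposition \ref{hessiananddiffgrad}, $\DIFF\nabla h$ vanishes on $\Omega\setminus\supp(|\DIFF^2 g|)=\bigcup_k \mathrm{int}(P_k)$, so $\nabla h$ is constant on each polytope interior, and hence $h$ is affine there. Remark \ref{cpwlrem} then provides, for every codimension-one face $e$, scalars $a_e,b_e\in\RR$ with $[\nabla g]_e=a_e\nu_e$ and $[\nabla h]_e=b_e\nu_e$, and identifies $|\DIFF^2 g|\mres e=|a_e|\HH^{n-1}\mres e$ (likewise for $h$); the support hypothesis forces $b_e=0$ whenever $a_e=0$, and only finitely many faces meet the bounded $\Omega$.

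For the sufficiency $(\Leftarrow)$ I would take $g=\tfrac12(g_1+g_2)$ with $g_i\in\BB$. The chain $1=|\DIFF^2 g|(\Omega)\le\tfrac12|\DIFF^2(g_1+g_2)|(\Omega)\le\tfrac12(|\DIFF^2 g_1|(\Omega)+|\DIFF^2 g_2|(\Omega))\le 1$ forces $|\DIFF^2 g_i|(\Omega)=1$ together with $|\DIFF^2(g_1+g_2)|(\Omega)=|\DIFF^2 g_1|(\Omega)+|\DIFF^2 g_2|(\Omega)$. Lemma \ref{rigidity} at $p=1$ then upgrades the scalar equality to the measure identity $|\DIFF^2(g_1+g_2)|=|\DIFF^2 g_1|+|\DIFF^2 g_2|$, so $\supp|\DIFF^2 g_i|\subseteq\supp|\DIFF^2 g|$. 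The hypothesis of the proposition then pins $g_i\in\mathrm{span}(g)$, and the normalization $|\DIFF^2 g_i|(\Omega)=1$ forces $g_i\in\{\pm g\}$ in $L^1/\mathcal A$; a brief sign check using $g\neq 0$ rules out everything except $g_1=g_2=g$.

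For the necessity $(\Rightarrow)$ I would set $C\defeq\sum_e b_e\,\mathrm{sgn}(a_e)\HH^{n-1}(e)$ and $h'\defeq h-Cg$, so that the face scalars of $h'$ satisfy $\sum_e(b_e-Ca_e)\,\mathrm{sgn}(a_e)\HH^{n-1}(e)=C-C\cdot 1=0$. Since $\Omega$ meets only finitely many faces, I can then choose $\epsilon>0$ small enough that $a_e\pm\epsilon(b_e-Ca_e)$ preserves the sign of $a_e$ on every face with $a_e\neq 0$; faces with $a_e=0$ contribute nothing because $b_e=0$. The CPWL HTV formula from Remark \ref{cpwlrem} then gives $|\DIFF^2(g\pm\epsilon h')|(\Omega)=1$ exactly, so both functions lie in $\BB$ and $g$ is their midpoint. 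Extremality forces $h'=0$ in $L^1/\mathcal A$, that is, $h=Cg\in\mathrm{span}(g)$.

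The main obstacle I anticipate is the use of Lemma \ref{rigidity} at $p=1$: the strict-convexity refinement that yields pointwise proportionality of gradient jumps is unavailable there, yet only the measure identity is needed to propagate the support property onto the convex summands. Aside from that, the argument is effectively finite-dimensional, a reduction made possible by the CPWL structure of $h$ and the boundedness of $\Omega$, with the algebraic adjustment $h\mapsto h-Cg$ closing the perturbation argument exactly rather than up to higher-order terms.
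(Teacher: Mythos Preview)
Your proof is correct and follows essentially the same strategy as the paper: the ``if'' direction via Lemma \ref{rigidity} (the paper only says this follows easily from rigidity, while you spell it out), and the ``only if'' direction via a perturbation along $h$ that exploits the rank-one structure of the gradient jumps. The only cosmetic difference is in the perturbation bookkeeping: the paper normalizes $g\pm\epsilon h$ to $\SSS$ and checks that the resulting convex coefficients sum to $1$, whereas you first recenter $h$ to $h'=h-Cg$ so that $g\pm\epsilon h'$ land on $\SSS$ exactly; your version is arguably a touch cleaner but not genuinely different.
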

\begin{proof}
The  ``only if'' implication follows easily from Proposition \ref{rigidity}. 

We prove now the converse implication via a perturbation argument, recall Remark \ref{cpwlrem}: we will use the same notation.

Let $g\in\EE$ and let $h\in\BB$ with $\supp(|\DIFF^2 h|)\subseteq\supp(|\DIFF^2 g|)$. We have to prove that $h\in{\rm span}(g)$. Assume by contradiction that $h\notin {\rm span}(g)$. We call now $\{P^g_k\}_k$ (resp.\ $\{P^h_k\}_k$) the triangles associated to $g$ (resp.\ $h$) and  $\{a_k^g\}_k$ (resp.\ $\{a_k^h\}_k$) the values associated to $\nabla g$ (resp.\ $\nabla h$). As we are assuming $\supp(|\DIFF^2 h|)\subseteq\supp(|\DIFF^2 g|)$, we can and will assume that $\{P^g_k\}_k$ and $\{P^h_k\}_k$ have the same cardinality and  $P^g_k=P_k^h$ for every $k$, so that we will drop the superscripts $g$ and $h$ on these triangles. Also, we assume that for every $k$, $P_k\subseteq\bar\Omega$.
Call 
$$
\delta\defeq\min\left\{| a^g_k-a^g_\ell|: \HH^1(\partial P_k\cap\partial P_\ell)>0,\ a^g_k\ne a^g_\ell\right\}
$$
and 
$$
\Delta\defeq\max\left\{| a^h_k-a^h_\ell|: \HH^1(\partial P_k\cap\partial P_\ell)>0\right\}
$$
and set finally $\epsilon \defeq {\delta}/{\Delta}$ (if $\Delta=0$, then $h=0$ and hence there is nothing to prove).
Now we write 
$$
g_1\defeq \frac{g+\epsilon h}{|\DIFF^2 (g+\epsilon h)|(\Omega)}\qquad\text{and}\qquad g_2\defeq \frac{g-\epsilon h}{|\DIFF^2 (g-\epsilon h)|(\Omega)},
$$
notice that $g_1,g_2\in\SSS$ are well defined as we are assuming $h\notin {\rm span}(g)$.
Clearly $g=c_1 g_1+c_2 g_2$, where $$c_1\defeq \frac{|\DIFF^2 (g+\epsilon h)|(\Omega)}{2} \qquad\text{and}\qquad 
c_2\defeq \frac{|\DIFF^2 (g-\epsilon h)|(\Omega)}{2}.$$
If we show that $c_1+c_2=1$, then we have concluded the proof, as this will show that $g$ was not extremal 
(recall we are assuming that $h\notin {\rm span}(g)$) and hence a contradiction.

We prove now the claim.  We compute
\begin{align*}
	|\DIFF^2(g+\epsilon h)|(\Omega)&=\sum_{k,\ell}| (a^g_k+\epsilon a_k^h) - (a_\ell^g+\epsilon a_\ell^h)|\HH^1(\partial P_k\cap \partial P_\ell)\\&
	=\sum_{k,\ell} | (a^g_k-a_\ell^g)+\epsilon (a_k^h- a_\ell^h)|\HH^1(\partial P_k\cap \partial P_\ell)
\end{align*}
and similarly we compute $|\DIFF^2 (g-\epsilon h)|$. 
Notice now that for every $k,\ell$ such that $\HH^1(\partial P_k\cap\partial P_\ell)>0$, then 
$a_k^h-a_\ell^h=\lambda_{k,\ell} (a_k^g-a_\ell^g)$ for some $\lambda_{k,\ell}$ such that $\epsilon|\lambda_{k,\ell}|\le 1$
(notice that $a_k^g=a_\ell^g$ implies $a_k^h=a_\ell^h$).  Therefore,
\begin{align*}
	&|\DIFF^2(g+\epsilon h)|(\Omega)+|\DIFF^2(g-\epsilon h)|(\Omega)\\&\qquad=\sum_{k,\ell}\big(| (a^g_k-a_\ell^g)+\epsilon (a_k^h- a_\ell^h)|
	+| (a^g_k-a_\ell^g)-\epsilon (a_k^h- a_\ell^h)|\big)\HH^1(\partial P_k\cap \partial P_\ell)\\&\qquad=\sum_{k,\ell}\big(
	|a_k^g-a_\ell^g| (1+\epsilon\lambda_{k,\ell})+
	|a_k^g-a_\ell^g| (1-\epsilon\lambda_{k,\ell})\big) \HH^1(\partial P_k\cap \partial P_\ell)\\&\qquad=2\sum_{k,\ell}
	|a_k^g-a_\ell^g|  \HH^1(\partial P_k\cap \partial P_\ell)=2|\DIFF^2 g|(\Omega)=2,
\end{align*}
which concludes the proof.
\end{proof}
\begin{prop}\label{dsacanos}
It holds that $${\rm {co}(\EE)}=\pi(\rm{CPWL}(\Omega))\cap\BB.$$
\end{prop}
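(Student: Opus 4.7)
The inclusion $\mathrm{co}(\EE) \subseteq \pi(\mathrm{CPWL}(\Omega)) \cap \BB$ is immediate: by definition $\EE \subseteq \pi(\mathrm{CPWL}(\Omega)) \cap \BB$, and the right-hand side is convex because $\BB$ is convex and a convex combination of two CPWL functions is again CPWL after taking a common refinement of polytope decompositions.

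For the reverse inclusion, fix $g \in \pi(\mathrm{CPWL}(\Omega)) \cap \BB$ with a representative piecewise affine on a polytope decomposition $\mathcal{T} = \{P_k\}_k$. My plan is to reduce the problem to finite dimensions by restricting to the linear space of CPWL functions with underlying decomposition $\mathcal{T}$. Setting
$$V_{\mathcal{T}} \defeq \pi\bigl(\{f \in L^1(\Omega) : f \text{ is CPWL with polytope decomposition } \mathcal{T}\}\bigr) \subseteq L^1(\Omega)/\mathcal{A},$$
one obtains a finite-dimensional subspace on which the HTV functional descends to a genuine norm (since its $L^1$-kernel is precisely $\mathcal{A}$). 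Hence $C \defeq V_{\mathcal{T}} \cap \BB$ is the closed unit ball of a finite-dimensional normed space, so it is a compact convex set, and by the Minkowski--Carathéodory theorem every element of $C$ --- in particular $g$ --- is a convex combination of at most $\dim(V_{\mathcal{T}}) + 1$ extreme points of $C$.

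The crux is then to show that every extreme point $g_0$ of $C$ already belongs to $\EE$, i.e., is extreme in $\BB$ itself. Suppose $g_0 = \tfrac{1}{2}(h_1 + h_2)$ with $h_1, h_2 \in \BB$. Since extreme points of a finite-dimensional normed ball lie on its sphere, $|\DIFF^2 g_0|(\Omega) = 1$, and the triangle inequality forces $|\DIFF^2 h_i|(\Omega) = 1$ together with $|\DIFF^2(h_1 + h_2)|(\Omega) = |\DIFF^2 h_1|(\Omega) + |\DIFF^2 h_2|(\Omega)$. The Rigidity Lemma \ref{rigidity} then yields $|\DIFF^2 h_1| + |\DIFF^2 h_2| = 2|\DIFF^2 g_0|$ as measures, so $\supp(|\DIFF^2 h_i|) \subseteq \supp(|\DIFF^2 g_0|)$, which is contained in the union of the codimension-$1$ faces of $\mathcal{T}$. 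Passing through Proposition \ref{hessiananddiffgrad}, the same support condition holds for $|\DIFF \nabla h_i|$; since $\nabla h_i \in \BV_\rmloc$ and $\DIFF \nabla h_i$ vanishes on the interior of each polytope $P_k$, $\nabla h_i$ is a.e. equal to some constant vector there. Hence $h_i$ coincides a.e. with a piecewise affine function on $\mathcal{T}$; the matching of the affine pieces across common faces follows from $h_i \in W^{1,1}_\rmloc$, which rules out any singular jump part in $\DIFF h_i$. Consequently $h_i \in V_{\mathcal{T}} \cap \BB = C$, and the extremality of $g_0$ in $C$ forces $h_1 = h_2 = g_0$. Thus $g_0 \in \mathrm{ext}(\BB) \cap \pi(\mathrm{CPWL}(\Omega)) = \EE$, and the proof is complete.

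I expect the most delicate point to be the implication ``$\supp(|\DIFF^2 h|)$ contained in the faces of $\mathcal{T}$ $\Rightarrow$ $h$ is CPWL on $\mathcal{T}$''. This requires invoking Proposition \ref{hessiananddiffgrad} to pass from the second-order support condition to the first-order statement about $\nabla h$, using that a $\BV$ function with vanishing distributional derivative on a connected open set is a.e.\ constant, and finally exploiting the Sobolev regularity of $h$ (absence of a singular part in $\DIFF h$) to show that the candidate piecewise affine representative is continuous across faces. Once this rigidity step is secured, the remainder of the argument is the routine Minkowski--Carathéodory decomposition on the finite-dimensional polyhedral ball $C$.
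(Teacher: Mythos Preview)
Your argument is correct and somewhat more streamlined than the paper's. Both proofs reduce to a finite-dimensional subspace of CPWL functions and invoke Krein--Milman (or Minkowski--Carath\'eodory) there, but the choice of subspace and the verification that its extreme points lie in $\EE$ differ. The paper first introduces the finite set $T=\{h\in\EE\cap\SSS:\supp(|\DIFF^2 h|)\subseteq\supp(|\DIFF^2 g|)\}$, then uses an iterative perturbation procedure (Steps~1--2) to show $T\neq\emptyset$ and $g\in\mathrm{span}(T)$, and finally (Step~3) shows $\mathrm{ext}(\BB\cap\mathrm{span}(T))\subseteq T$ via the perturbation argument of Proposition~\ref{fivedotfive}. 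You instead work directly with $V_{\mathcal T}$, the space of all CPWL functions subordinate to the fixed triangulation of $g$; membership $g\in V_{\mathcal T}$ is then automatic, and the iterative Steps~1--2 disappear. Your verification that $\mathrm{ext}(V_{\mathcal T}\cap\BB)\subseteq\EE$ goes through the Rigidity Lemma~\ref{rigidity} to obtain the support inclusion $\supp(|\DIFF^2 h_i|)\subseteq\supp(|\DIFF^2 g_0|)$, followed by the observation (via Proposition~\ref{hessiananddiffgrad} and $W^{1,1}_\rmloc$ regularity) that any $h$ with $\supp(|\DIFF^2 h|)$ contained in the faces of $\mathcal T$ is itself CPWL on $\mathcal T$; this is the step you correctly flagged as the most delicate, and your justification is sound. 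The paper's route has the minor conceptual advantage of working inside the smaller space $\mathrm{span}(T)$ and tying the argument more closely to the characterization of Proposition~\ref{fivedotfive}; your route buys a shorter and more transparent proof that avoids the inductive support-reduction entirely.
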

\begin{proof}
Being the inclusion $\subseteq$ trivial by convexity, we focus on the opposite inclusion. We will heavily rely on Remark \ref{cpwlrem}. Take $g\in\pi(\rm{CPWL}(\Omega))\cap\BB$, $g\ne 0$. Now consider the set
\begin{equation}\notag
T\defeq\left\{h\in\mathcal E\cap\SSS:\supp(|\DIFF^2 h|)\subseteq\supp(|\DIFF^2 g|)\right\},
\end{equation}
and notice that by  Proposition \ref{fivedotfive}  and
 the fact that $g\in\rm CPWL(\Omega)$, then $T$ is finite (we will show that $T\ne\emptyset$ in \textbf{Step 1}). Also notice that $h\in T$ if and only if $-h\in T$, so that we write $T=\{\pm t_1,\dots,\pm t_\ell\}$.
We aim at showing that $g\in {\rm co}(T)$, this will  conclude the proof.  
\medskip\\\textbf{Step 1.} We claim that $T\ne \emptyset$. First, if $g\in\RR\EE$, then the whole proof is concluded, as $g/|\DIFF^2 g|(\Omega)\in T$ so that $g\in {\rm co}(T)$. Otherwise,   thanks to Proposition~\ref{fivedotfive}, we can take $h_1\in\BB$ with $\supp(|\DIFF^2 h_1|)\subseteq \supp(|\DIFF^2 g|)$ but $h_1\notin{\rm span}(g)$. Notice that this forces $h_1\in\pi({\rm CPWL}(\Omega))$. We can then take $\lambda_1\in\RR$ such that $$0< \HH^1(\supp(|\DIFF^2(g-\lambda_1 h_1)|))\le \HH^1(\supp(|\DIFF^2 g|))-\Lambda,$$
where $$\Lambda:=\min \{\HH^1(\partial P_k\cap \partial P_\ell): \HH^1(\partial P_k\cap \partial P_\ell)>0,\,\,
  k\neq\ell \}$$
and we are using the same notation as for Proposition \ref{fivedotfive} (here the  finitely many triangles are relative to $g$). 
If $g-\lambda_1 h_1\in\RR\EE$ then we have concluded the proof of this step. Otherwise, take $h_2\in\BB$ with $\supp(|\DIFF^2 h_2|)
\subseteq \supp(|\DIFF^2 (g-\lambda_1 h_1)|)$ but $h_2\notin{\rm span}(g-\lambda_1 h_1)$. Take then $\lambda_2\in\RR$ such that 
$$0< \HH^1(\supp(|\DIFF^2(g-\lambda_1 h_1-\lambda_2 h_2)|))\le \HH^1(\supp(|\DIFF^2 (g-\lambda_1 h_1)|))-\Lambda\le 
\HH^1(\supp(|\DIFF^2 g|))-2\Lambda.$$
If $g-\lambda_1 h_1-\lambda_2 h_2\in\RR\EE$, then the proof of this step is concluded. Otherwise we continue in this way, but, by the uniform decay posed on Hessian--Schatten total variations, this process must stop, and this forces eventually $g-\lambda_1 h_1-\lambda_2 h_2-\ldots- \lambda_s h_s\in\RR\EE$.
\medskip\\\textbf{Step 2.} We claim that $g\in{\rm span}(T)$. The proof of this fact is identical to the one of \textbf{Step 1}, but we take 
 $h_i\in T$
 instead of $h_i\in\BB$. The possibility of doing so is ensured by \textbf{Step 1} (applied to $g,g-\lambda_1 h_1,\ldots$) and process would stop when $g-\lambda_1 g_1-\lambda_2 h_2- \ldots- \lambda_s h_s=0$.
\medskip\\\textbf{Step 3.} We consider the finite dimensional vector subspace $\mathcal V\defeq{\rm span}(T)\subseteq {L^1(\Omega)}/{\mathcal{A}}$, endowed with the subspace topology. Consider also $\BB\cap \mathcal{V}$, compact and convex, notice that $g\in \BB\cap \mathcal{V}$, by \textbf{Step 2}. We claim that ${\rm ext}(\BB\cap \mathcal{V})\subseteq T$. This will conclude the proof by the Krein--Milman Theorem, as in \eqref{KM}, with $T$ in place of $\MM$ and $\BB\cap \mathcal{V}$ in place of $\BB$. We are using that $T$ is closed and that ${\rm co}(T)=\overline{\rm co}(T)$ as $T$ is finite. Take $h\in {\rm ext}(\BB\cap\mathcal{V})$, write then $h=\lambda_1 t_1+\ldots +\lambda_\ell t_\ell$. Then there exists $j\in \{1,\dots,\ell\}$ such that $\supp(|\DIFF^2 t_j|)\subseteq \supp(|\DIFF^2 h|)$, as $\supp(|\DIFF^2 h|)\subseteq\supp(|\DIFF^2 g|)$ and by \textbf{Step 1} applied to $h$ instead of $g$. The same perturbation argument of Proposition \ref{fivedotfive} shows that, in order for $h$ to be extremal in $\BB\cap\mathcal V$, we must have $h=\pm t_j$, which concludes the proof.
\end{proof}

\begin{thm}[Density of $\rm CPWL$ extreme points]\label{thm:extreme}
If $n=2$, then ${\rm ext}(\BB)\subseteq\overline\EE$. In particular, the extreme points of 
$$\{f\in L^1_\rmloc(\Omega):|\DIFF^2 f|(\Omega)\le 1\}$$
are contained in $\pi^{-1}(\overline{\EE})$ (recall that the closure is taken with respect to the 
quotient topology of $\displaystyle {L^1(\Omega)}/{\mathcal A}$).
If Conjecture \ref{maincnj} holds, this is true for any number $n$ of space dimensions.
\end{thm}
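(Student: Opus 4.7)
The plan is to reduce Theorem \ref{thm:extreme} to the Krein--Milman-type dichotomy \eqref{KM} by showing that $\BB$ coincides with the closed convex hull of $\EE$. Once this is established, taking $\MM=\EE$ in \eqref{KM} immediately yields ${\rm ext}(\BB)\subseteq\overline{\EE}$, and the ``in particular'' statement follows by transferring extremality across the projection $\pi$.

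The inclusion $\overline{\rm co}(\EE)\subseteq\BB$ is automatic because $\EE\subseteq\BB$ and $\BB$ is convex and (by Proposition \ref{sobo} together with Rellich--Kondrachov) compact in the quotient topology. For the reverse inclusion, fix $g\in\BB$ and pick a representative $f\in L^1(\Omega)$ with $|\DIFF^2 f|(\Omega)\le 1$. The main density result, Theorem \ref{mainthm}, provides a sequence $\{f_k\}_k\subseteq{\rm CPWL}(\Omega)$ with $f_k\to f$ in $L^\infty(\Omega)$ and $|\DIFF^2 f_k|(\Omega)\to |\DIFF^2 f|(\Omega)$. Setting $\lambda_k\defeq\max(1,|\DIFF^2 f_k|(\Omega))$, one has $\lambda_k\to 1$, and the rescaled functions $\tilde f_k\defeq f_k/\lambda_k$ still lie in ${\rm CPWL}(\Omega)$, still converge to $f$ in $L^\infty(\Omega)$ (and hence in $L^1(\Omega)$), and satisfy $|\DIFF^2\tilde f_k|(\Omega)\le 1$. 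Therefore $\pi(\tilde f_k)\in \pi({\rm CPWL}(\Omega))\cap\BB$ and $\pi(\tilde f_k)\to g$ in the quotient norm. By Proposition \ref{dsacanos} each $\pi(\tilde f_k)$ belongs to ${\rm co}(\EE)$, so $g\in\overline{\rm co}(\EE)$. This proves $\BB=\overline{\rm co}(\EE)$, and \eqref{KM} delivers the first assertion.

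For the ``in particular'' part, let $f\in L^1_\rmloc(\Omega)$ be extremal in the unit ball of the seminorm $|\DIFF^2\,\cdot\,|(\Omega)$. If $\pi(f)=t\, \pi(h_1)+(1-t)\pi(h_2)$ with $t\in(0,1)$ and $|\DIFF^2 h_i|(\Omega)\le 1$, then $f-t h_1-(1-t) h_2$ is affine, say equal to $a$, and we may rewrite
$$
f=t(h_1+a)+(1-t)(h_2+a),
$$
where $h_i+a$ still satisfies $|\DIFF^2(h_i+a)|(\Omega)\le 1$. Extremality of $f$ forces $h_1+a=h_2+a=f$, hence $\pi(h_1)=\pi(h_2)=\pi(f)$, so $\pi(f)\in{\rm ext}(\BB)\subseteq\overline{\EE}$, i.e., $f\in\pi^{-1}(\overline{\EE})$. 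The argument is dimension-free: the only ingredient restricting us to $n=2$ is the invocation of Theorem \ref{mainthm}, so granting Conjecture \ref{maincnj} the same proof covers arbitrary $n$.

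The substantive obstacle, namely the approximation by CPWL functions in energy together with uniform convergence, has already been overcome in Section \ref{proofmain}; the remaining work is essentially bookkeeping, combining Theorem \ref{mainthm}, Proposition \ref{dsacanos}, and the abstract form \eqref{KM} of Krein--Milman (the ``Milman'' direction), plus the elementary lift of extremality through the quotient by $\mathcal A$.
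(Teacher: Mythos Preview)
Your proof is correct and follows essentially the same route as the paper's: combine Proposition~\ref{dsacanos} with the density Theorem~\ref{mainthm} to obtain $\BB=\overline{\rm co}(\EE)$, then invoke the Krein--Milman direction \eqref{KM}. You have made explicit two points the paper leaves implicit---the rescaling $\tilde f_k=f_k/\max(1,|\DIFF^2 f_k|(\Omega))$ needed to force the CPWL approximants into $\BB$, and the lift of extremality through the quotient $\pi$---but the underlying argument is identical.
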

\begin{proof} By Proposition \ref{dsacanos}, 
$${\rm {co}(\EE)}=\pi(\rm{CPWL}(\Omega))\cap\BB,$$
so that the density Theorem \ref{mainthm} gives
$$
\rm\overline{co}(\EE)=\overline{\pi(\rm{CPWL}(\Omega))\cap \BB}=\BB.
$$
Then the claim follows from the Krein--Milman Theorem as recalled in  \eqref{KM}.
\end{proof}

 \section*{Acknowledgments}
This work was supported in part by the European Research Council (ERC Project FunLearn) under Grant 101020573 and in part by the PRIN MIUR project
``Gradient flows, Optimal Transport and Metric Measure Structures''.  The authors would like to thank Sergio Conti for helpful discussions. 
\bibliographystyle{alpha}
\bibliography{Biblio}

\end{document}